\newcommand     {\sym}[1]       {\operatorname{#1}}
\newcommand     {\F}            {{\mathbb F}}
\newcommand     {\Q}            {{\mathbb Q}}
\newcommand     {\Pro}          {{\mathbb P}}
\newcommand     {\Z}            {{\mathbb Z}}
\newcommand     {\Zloc}[1][(\ell)]{{\mathbb Z}_{#1}}
\newcommand     {\SL}[1][\Z]    {\operatorname{SL}(2,{#1})}
\newcommand     {\latdef}[2]    {\left(#1,#2\right)}
\newcommand     {\lat}[1][L]    {\underline{#1}}
\newcommand     {\dual}[1]      {{#1}^\sharp}
\newcommand     {\pseries}[1]   {[\hspace{-2pt}[{#1}]\hspace{-2pt}]}
\newcommand     {\legendre}[2]  {\big(\frac{#1}{#2}\big)}
\theoremstyle{plain}
\newtheorem*{Main Theorem}{Main Theorem}
\newtheorem{Theorem}{Theorem}
\newtheorem{Corollary}{Corollary}
\newtheorem*{Corollary-wl}{Corollary}
\newtheorem{Lemma}{Lemma}
\title{%
  Reduction mod $\ell$ of Theta Series of Level $\ell^n$
}
\author{%
  Nils-Peter Skoruppa}
\date{%
%   \version}
}
\keywords{%
Modular forms, one variable,
Theta series; Weil representation,
Congruences for modular and $p$-adic modular forms
}
\begin{document}

\maketitle

%%%%%%%%%%%%%%%%%%%%%%%%%%%%%%%%%%%%%%%%%%%%%%%%%%%%%%%%%%%%%%%%%%%%%%%%%%%%%
\begin{abstract}
  \noindent
  It is proved that the theta series of an even lattice whose level is
  a power of a prime $\ell$ is congruent modulo $\ell$ to an elliptic
  modular form of level~1. The proof uses arithmetic and algebraic
  properties of lattices rather than methods from the theory of
  modular forms.  The methods presented here may therefore be
  especially pleasing to those working in the theory of quadratic
  forms, and they admit generalizations to more general types of theta
  series as they occur e.g.  in the theory of Siegel or Hilbert
  modular forms.
\end{abstract}

%%%%%%%%%%%%%%%%%%%%%%%%%%%%%%%%%%%%%%%%%%%%%%%%%%%%%%%%%%%%%%%%%%%%%%%%%%%%%
\section{Statement of Results}

Let $\ell$ be a prime. We assume throughout that $\ell\ge 5$.  It is
well-known that every modular form of level $\ell^n$ is congruent
modulo $\ell$ to a modular form of level one
\cite[Th\'eor\`eme~5.4]{Serre}. This fact applies in particular to
theta series associated to quadratic forms whose level is a power of
$\ell$.  The purpose of this note is to prove a slightly more precise
statement and to discuss various consequences. Though the main result
is actually a statement about modular forms, the proof presented here
works only for theta series.  The virtue of this method of proof,
however, is that it admits generalizations to more general types of
theta series. We shall pursue this elsewhere. In this article we shall
prove the following theorem.

\begin{Main Theorem}
  Let $\lat = \latdef Lb$ be an even integral lattice whose level is a
  power of $\ell$, and let $e(\lat)$ be the sum of the elementary
  divisors of~$\lat$.  Then there exists a modular form~$f$ of level
  $1$ and weight $e(\lat)/2$ and with integral Fourier coefficients
  such that
  \[
  \theta_{\lat}:=\sum_{x\in L} q^{\frac12 b(x,x)} \equiv f \bmod \ell
  \]
\end{Main Theorem}

Here we are using standard terminology. By a lattice $\lat = \latdef
Lb$, we understand a free $\Z$-module $L$ of finite rank equipped with
a symmetric positive definite bilinear form $b$. We call it integral
if $b(x,x)$ is an integer for all $x$ in $L$, and we call it even, if
$b(x,x)$ is an even integer for all $x$ in $L$.  Note that in this
article the word lattice refers always to what is sometimes called
more precisely {\it positive definite lattice}.  The elementary
divisors of an even $\lat$ of rank $r$ are the $r$ elementary divisors
of the Gram matrix $G=\big(b(x_i,x_j)\big)_{i,j}$, where the $x_i$ run
through a $\Z$-basis of~$L$, and the level of $\lat$ is the smallest
natural number $l$ such that $lG^{-1}$ is an integral matrix with even
integers on its diagonal.  Of course, the elementary divisors and the
level do not depend on the particular choice of the $x_i$.

The congruence stated in the theorem has to be understood in the naive
sense that the difference of the series on both sides of the
congruence, viewed as formal power series in $q$ with coefficients in
$\Z$, lies in $\ell \Z\pseries q$. Here, as usual, modular forms as
functions of a variable~$z$ in the complex upper half plane are
identified with the formal power series obtained by expanding them in
powers of~$q=\exp(2\pi i z)$.

Note that $e(\lat)$, for an even $\lat$ as in the main theorem, is
divisible by~$4$.  In fact, the rank $r$ of the underlying $\Z$-module
$L$ is even since the determinant $d=\det(G)$ of its associated Gram
matrix $G$ is odd.  Moreover, using, for any integer $n\ge 0$, the
congruence $\ell^n\equiv 1+n(\ell-1) \bmod 2(\ell-1)$ and the fact
that $d$ equals the product of the elementary divisors of $\lat$, one
finds that
\[
\frac {e(\lat)}2 \equiv
\begin{cases}
  \frac r2 \bmod \ell-1            & \text{if~$d$ is a perfect square},\\
  \frac {r + \ell-1}2 \bmod \ell-1 & \text{otherwise.}
\end{cases}
\]
But $(-1)^{\frac r2}d\equiv 1 \bmod 4$, and hence $\frac r2$ is even
unless $d$ is not a perfect square and $d \equiv \ell \equiv -1 \bmod
4$.

The simplest examples for the main theorem are provided by binary
quadratic forms.  If $[a,b,c]$ denotes a positive definite integral
binary form (in Gauss notation) of discriminant $-\ell = b^2-4ac$ then
by the theorem
\[
\theta_{[a,b,c]} = \sum_{x,y\in\Z} q^{ax^2+bxy+cz^2}
\]
is congruent modulo~$\ell$ to a modular form of level 1 and weight
$\frac{\ell +1}2$. Noteworthy examples are
\begin{align*}
  \theta_{[1,1,2]} &\equiv E_4
  \equiv 1 + 2q + 4q^{2} + \cdots \bmod 7\\
  \theta_{[2,1,3]} &\equiv E_4^3-720 \Delta
  \equiv 1 + 2q^{2} + 2q^{3} + 2q^{4} + \cdots   \bmod 23\\
  \theta_{[2,1,4]} &\equiv E_4^4-960 E_4 \Delta
  \equiv 1 + 2q^{2} + 2q^{4} + \cdots   \bmod 31\\
  \theta_{[3,1,4]} &\equiv E_4^6-1440 E_4^3 \Delta+125280 \Delta^2
  \equiv 1 + 2q^{3} + 2q^{4} + \cdots   \bmod 47\\
  \theta_{[4,3,5]} &\equiv E_4^9-2160 E_4^6 \Delta+965520 E_4^3
  \Delta^2-27302400 \Delta^3
  \\&\qquad \equiv 1 + 2q^{4} + 2q^{5} + \cdots  \bmod 71\\
  % \theta_{[]}&\equiv \bmod 79\\
  % \theta_{[]}&\equiv \bmod 103
\end{align*}
Here and in the following, for an even positive integer $k$, we use
\begin{gather*}
  E_k = 1 - \frac {2k}{B_{k}}\sum_{n\ge 1}\sigma_{k-1}(n)\,q^n,\\
  \Delta = \frac {E_4^3-E_6^2}{12^3} = q\prod_{n\ge 1}(1-q^n)^{24} ,
\end{gather*}
with the Bernoulli numbers $B_2=\frac16$, $B_4=-\frac 1{30}$,
$B_6=\frac 1{42}$, \dots.  Note that the modular forms on the right
are the {\it extremal modular forms} of the respective weights,
i.e.~the modular forms $f_k$ of weight $k$ (here divisible by 4) whose
Fourier expansion is of the form $f_k \equiv 1 \bmod q^{\lfloor\frac
  k{12}\rfloor+1}$.  It is well known that for $2k=8,24,32,48$, these
extremal modular forms are equal to the theta series of even
unimodular lattices.  An obvious explanation for a congruence modulo
$\ell$ between two theta series associated to lattices $\lat$ and
$\lat [M]$ is the existence of an automorphism $\sigma$ of $\lat$
whose order is a power of $\ell$ and such that $\lat [M]$ is
isomorphic to the {\it fixed lattice} $\lat^\sigma = \latdef
{L^\sigma}{b'}$, where $L^\sigma$ is the submodule of all $x$ in $L$
which are fixed by $\sigma$ and where we use $b'$ for the restriction
of $b$ to $L^\sigma\times L^\sigma$
(cf.~Theorem~\ref{thm-automorphism-congruence} below).  And indeed, it
is known \cite{N-Sl} that the even unimodular lattices $E_8$, the
Leech lattice, $\Lambda_{RM}$ and $P_{48q}$, whose theta series are
equal to $f_4$, $f_{12}$, $f_{16}$ and $f_{24}$, have automorphisms of
order $7$, $23$, $31$ and $47$, respectively. (However, some of the
other lattices which have theta series equal to $f_{16}$ or $f_{24}$
do not have such automorphisms). Though the congruence for
$\theta_{[4,3,5]}$ does not prove that an extremal lattice of
dimension 72, if it existed, would have an automorphism of order $71$,
it supports such a speculation. There are exactly 55475 even
unimodular lattices of dimension 72 which have an automorphism of
order~71\footnote{The even 72-dimensional lattices having an
  automorphism of order 71 can be downloaded from
  http:/\kern-3pt/data.countnumber.de. A report on the computation of
  these lattices will be published elsewhere.}

We discuss some consequences of the main theorem. For this let
$\Theta(\ell^\infty)$ be the $\Zloc$-algebra generated by the theta
series $\theta_{\lat}$, where $\lat$ runs through all even
lattices whose level is a power of $\ell$.  Here and in the sequel we
use $\Zloc$ for the localization of $\Z$ at $\ell$, i.e.~for the ring
of rational numbers of the form $\frac rs$ with integers $r$, $s$ and
$s$ not divisible by $\ell$.  We have a natural filtration given by
the subalgebras $\Theta(\ell^n)$ generated by those $\theta_{\lat}$,
where the level of $\lat$ divides $\ell^n$.  Moreover, let $M_k$ be
the $\Zloc$-module of modular forms of level 1 and weight $k$ whose
Fourier coefficients are in $\Zloc$, and let $M$ be the
$\Zloc$-algebra generated by all these modular forms. Then $M$ is the
direct sum of the $M_k$, and $M=\Zloc[(\ell)] [E_4,E_6]$.  If
$\F_\ell=\Z/\ell\Z$ denotes the field with~$\ell$ elements we have a
natural map
\[
\Zloc\pseries q \rightarrow \F_\ell\pseries q \cong \Zloc\pseries
q/\ell \Zloc\pseries q, \quad f\mapsto \widetilde f
\]
which is defined by reducing each coefficient of $f$ modulo $\ell$.
Identifying modular forms and theta series with power series in $q$ we
can therefore rewrite the statement of the main theorem in the
(weaker) form
\[
\widetilde{\Theta(\ell^n)} \subseteq \widetilde M.
\]

If $\ell=5$ then $\widetilde {E_4} = 1$, and by the main theorem
$\widetilde {E_6} = \widetilde {\theta_{\lat}}$ for every quaternary
lattice $\lat$ of level~$5$ and determinant $25$. One may e.g.~take
the lattice $\lat [F]$ defined by the quaternary form
\[
F=\left(\begin{smallmatrix} 2&1&1&1\\1&2&0&1\\1&0&4&2\\1&1&2&4
  \end{smallmatrix}\right)
.
\]
We thus find
\[
\widetilde{M} = \widetilde{\Theta(5^\infty)} =
\F_\ell[\widetilde{\theta_{\lat [F]}}] .
\]

Similarly, if $\ell=7$, then $\widetilde{E_6} = 1$ and, by the main
theorem, $\widetilde{E_4} = \widetilde{\theta_{[1,2,8]}}$.  We
conclude
\[
\widetilde{M} = \widetilde{\Theta(7^{\infty})} = \F_\ell[\widetilde{
  \theta_{[1,1,2]}}] .
\]

If $\ell=11$, then $\widetilde{\theta_{[1,1,3]}} = \widetilde{E_6}$
Since $\widetilde{E_4}\widetilde{E_6} = \widetilde{E_{10}} = 1$ we
find here
\[
\widetilde{M} = \widetilde{\Theta(11^{\infty})} = \F_\ell[\widetilde{
  \theta_{[1,1,3]}}, 1/\widetilde{\theta_{[1,1,3]}}]
\]

More generally, it is not hard to deduce from the main theorem:
\begin{Corollary}
  \label{cor-filtration}
  In the notations of the preceding paragraphs one has
  \begin{align*}
    \widetilde{\Theta(1)} = \widetilde{\Theta(\ell)} =
    \widetilde{\Theta(\ell^2)} = \dots =
    \widetilde{\Theta(\ell^{\infty})} = \widetilde M \quad&\text{for
    }\ell \equiv 3 \bmod 4,\\
    \widetilde{\Theta(1)} \subsetneq \widetilde{\Theta(\ell)} =
    \widetilde{\Theta(\ell^2)} = \dots =
    \widetilde{\Theta(\ell^{\infty})} = \widetilde M \quad&\text{for
    }\ell \equiv 1 \bmod 4.
  \end{align*}
  In particular, $\widetilde{\Theta(\ell^{\infty})}$ is a finitely
  generated algebra over $\F_\ell$ of transcendence degree 1.
\end{Corollary}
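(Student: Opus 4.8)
The plan is to pin down the two ends of the filtration against the classical structure of $\widetilde M$ and let the chain $\widetilde{\Theta(1)}\subseteq\widetilde{\Theta(\ell)}\subseteq\widetilde{\Theta(\ell^2)}\subseteq\dots\subseteq\widetilde{\Theta(\ell^\infty)}\subseteq\widetilde M$ (the last inclusion being the Main Theorem) do the rest. Recall that $\widetilde M=\F_\ell[\widetilde{E_4},\widetilde{E_6}]$, that the kernel of the surjection $\F_\ell[X,Y]\to\widetilde M$ sending $X,Y$ to $\widetilde{E_4},\widetilde{E_6}$ is generated by $P_{\ell-1}(X,Y)-1$, where $P_{\ell-1}$ is the isobaric polynomial with $E_{\ell-1}=P_{\ell-1}(E_4,E_6)$, and that $\widetilde{E_{\ell-1}}=1$ in $\widetilde M$ (the Hasse invariant reduces to $1$, by the von Staudt--Clausen congruence for $2(\ell-1)/B_{\ell-1}$). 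As a sub-$\F_\ell$-algebra of $\F_\ell\pseries q$ the ring $\widetilde M$ is an integral domain; it is finitely generated and is neither $\F_\ell$ nor a field (e.g.\ $\widetilde\Delta$ is a non-zero non-unit), so its Krull dimension --- equivalently the transcendence degree of its fraction field --- is $1$. The decisive observation is that every monomial $E_4^aE_6^b$ occurring in $P_{\ell-1}$ satisfies $4a+6b=\ell-1$, so $b$ is odd if $\ell\equiv3\bmod 4$ and even if $\ell\equiv1\bmod 4$.

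Next I would compute $\widetilde{\Theta(1)}$. From $\theta_{E_8}=E_4$ one gets $\widetilde{E_4}\in\widetilde{\Theta(1)}$; from $\theta_{E_8^3}-\theta_N=24(30-h_N)\,\Delta$ for a Niemeier lattice $N$ with common Coxeter number $h_N$, choosing $N$ of root system $A_1^{24}$ when $\ell\neq7$ and of root system $A_{24}$ when $\ell=7$ so that $\ell\nmid 24(30-h_N)$, one gets $\widetilde\Delta\in\widetilde{\Theta(1)}$; hence $\F_\ell[\widetilde{E_4},\widetilde\Delta]\subseteq\widetilde{\Theta(1)}$. Conversely, every even unimodular lattice has rank divisible by $8$, so each generator $\widetilde{\theta_L}$ of $\widetilde{\Theta(1)}$ is the reduction of a level-one form of weight $\equiv0\bmod 4$ --- a polynomial in $E_4$ and $E_6^2$ with $\Zloc$-coefficients, as $\ell\ge5$ --- whence $\widetilde{\Theta(1)}\subseteq\F_\ell[\widetilde{E_4},\widetilde{E_6}^2]=\F_\ell[\widetilde{E_4},\widetilde\Delta]$ (using $1728\Delta=E_4^3-E_6^2$). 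So $\widetilde{\Theta(1)}=\F_\ell[\widetilde{E_4},\widetilde\Delta]$.

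Now I split by the residue of $\ell$ modulo $4$. If $\ell\equiv3\bmod 4$, then $P_{\ell-1}=E_6\,Q(E_4,E_6^2)$ for some polynomial $Q$, and multiplying the relation $\widetilde{E_6}\,Q(\widetilde{E_4},\widetilde{E_6}^2)=\widetilde{E_{\ell-1}}=1$ by $\widetilde{E_6}$ yields $\widetilde{E_6}=\widetilde{E_6}^2\,Q(\widetilde{E_4},\widetilde{E_6}^2)\in\F_\ell[\widetilde{E_4},\widetilde{E_6}^2]$; therefore $\widetilde M=\F_\ell[\widetilde{E_4},\widetilde{E_6}]=\F_\ell[\widetilde{E_4},\widetilde\Delta]=\widetilde{\Theta(1)}$, and the whole filtration equals $\widetilde M$. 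If $\ell\equiv1\bmod 4$, then $P_{\ell-1}-1\in\F_\ell[X,Y^2]$, so the $\Z/2$-grading of $\F_\ell[E_4,E_6]$ by the parity of the $E_6$-exponent descends to $\widetilde M$; its even part is $\F_\ell[\widetilde{E_4},\widetilde{E_6}^2]=\widetilde{\Theta(1)}$, and $\widetilde{E_6}$ is a non-zero element of the odd part ($\widetilde{E_6}\neq0$ in $\widetilde M\subseteq\F_\ell\pseries q$ since its $q$-expansion has constant term $1$); hence $\widetilde{\Theta(1)}\subsetneq\widetilde M$.

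For $\ell\equiv1\bmod 4$ it remains to prove $\widetilde{\Theta(\ell)}=\widetilde M$, and this is the main obstacle. Because $\widetilde{\Theta(1)}\subseteq\widetilde{\Theta(\ell)}\subseteq\widetilde M=\widetilde{\Theta(1)}\oplus\widetilde{\Theta(1)}\widetilde{E_6}$ and $\widetilde{E_6}^2\in\widetilde{\Theta(1)}$, one has $\widetilde{\Theta(\ell)}=\widetilde{\Theta(1)}\oplus I\widetilde{E_6}$ for the ideal $I$ of $\widetilde{\Theta(1)}$ generated by the ``$\widetilde{E_6}$-components'' $\beta_L$ (defined by $\widetilde{\theta_L}=\alpha_L+\beta_L\widetilde{E_6}$ with $\alpha_L,\beta_L\in\widetilde{\Theta(1)}$) of the theta series of the even lattices $L$ of level dividing $\ell$; and $\widetilde{\Theta(\ell)}=\widetilde M$ if and only if $1\in I$, i.e.\ if and only if the $\beta_L$ have no common zero on $\operatorname{Spec}\widetilde{\Theta(1)}$. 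Since $\operatorname{Spec}\widetilde M\to\operatorname{Spec}\widetilde{\Theta(1)}$ is a double cover branched exactly over the finitely many zeros of $\widetilde{E_6}$, this is a \emph{finite} set of conditions: one must make the level-$\ell$ theta series, reduced mod $\ell$, separate the two points of $\operatorname{Spec}\widetilde M$ lying over each geometric point of $\operatorname{Spec}\widetilde{\Theta(1)}$. I would arrange this by exhibiting, for each $\ell\equiv1\bmod 4$, an even quaternary lattice of determinant $\ell^2$ and level $\ell$ (for $\ell=5$ this is the lattice $\lat[F]$ recorded above; for general $\ell$ such a lattice exists by Nikulin's theorem, the discriminant form on $(\Z/\ell)^2$ being chosen so that Milgram's signature condition holds), forming its orthogonal sums with copies of $E_8$ (which multiply the theta series by powers of $E_4$), adjoining finitely many further level-$\ell$ lattices of larger rank and suitably chosen discriminant form, and verifying the needed non-vanishing of a bounded number of Fourier coefficients modulo $\ell$ (finitely many, by Sturm's bound). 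Carrying out this last, essentially computational, step --- engineering level-$\ell$ lattices that realise modulo $\ell$ enough of the odd part $\widetilde{E_6}\,\widetilde{\Theta(1)}$ of $\widetilde M$ --- is where I expect the real work to lie; once it is done, $\widetilde{\Theta(\ell)}=\widetilde{\Theta(\ell^2)}=\dots=\widetilde{\Theta(\ell^\infty)}=\widetilde M$, and the closing assertion that $\widetilde{\Theta(\ell^\infty)}$ is a finitely generated $\F_\ell$-algebra of transcendence degree $1$ is inherited from $\widetilde M$.
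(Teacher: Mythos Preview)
Your treatment of the $\ell\equiv 3\bmod 4$ case, of the strict inclusion $\widetilde{\Theta(1)}\subsetneq\widetilde M$ for $\ell\equiv 1\bmod 4$, and of the transcendence degree is fine and essentially the same as the paper's (your Niemeier argument for $\widetilde\Delta\in\widetilde{\Theta(1)}$ is a pleasant variant of the paper's use of the Leech lattice). The genuine gap is exactly where you flag it: for $\ell\equiv 1\bmod 4$ you do not prove $\widetilde{E_6}\in\widetilde{\Theta(\ell)}$. Your reduction to the ideal $I\subseteq\widetilde{\Theta(1)}$ is correct, but the claim that ``this is a finite set of conditions'' does not follow from the double-cover picture: showing $1\in I$ means the $\beta_L$ have no common zero on the \emph{entire} curve $\operatorname{Spec}\widetilde{\Theta(1)}$, not merely on the branch locus, so there is no a priori finite checklist. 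Even granting a Sturm-bound verification for one fixed $\ell$, the bound grows with $\ell$ and you would have to repeat the argument for every prime $\ell\equiv 1\bmod 4$; that is not a proof.

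The paper closes this gap by a direct, uniform construction that avoids any case analysis in $\ell$. Take an even rank-$12$ lattice $\lat$ of level $\ell$ with square determinant $\ell^{2n}$, $n\ge 2$ (e.g.\ the threefold orthogonal sum of a quaternary level-$\ell$ lattice). Its theta series is a weight-$6$ form on $\Gamma_0(\ell)$ with trivial character, so its trace $\theta=\sum_{A\in\Gamma_0(\ell)\backslash\SL}\theta_{\lat}|_6A$ is a level-$1$ form, hence a scalar multiple of $E_6$. Computing the trace via Poisson summation gives
\[
\theta=\theta_{\lat}-\ell^{1-n}\sum_{\substack{x\in\dual L\\ b(x,x)\in\Z}}q^{\frac12 b(x,x)}=(1-\ell^{1-n})E_6,
\]
whence $E_6\equiv\sum_{x\in\dual L,\ b(x,x)\in\Z}q^{b(x,x)/2}\bmod\ell$. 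Finally one rewrites this dual-lattice sum as an integral linear combination of theta series of the even level-$\ell$ overlattices $\lat_u$ (preimages in $\dual L$ of the isotropic lines $u\subset\dual L/L$), which places $\widetilde{E_6}$ in $\widetilde{\Theta(\ell)}$ for every $\ell$ at once. This trace/Poisson step is the missing idea in your proposal.
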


\begin{Corollary}
  \label{cor-grading}
  $\widetilde{\Theta(\ell^{\infty})}$ is a $\Z/(\ell-1)\Z$-graded
  algebra:
  \[
  \widetilde{\Theta(\ell^{\infty})} = \bigoplus_{t \bmod \ell - 1}
  \widetilde{\Theta(\ell^{\infty})}^t,
  \]
  where $\widetilde{\Theta(\ell^{\infty})}^t$ is the
  $\F_\ell$-subspace generated by all $\widetilde{\theta_F}$ with
  $\frac {e(F)}2\equiv t \bmod \ell-1$.
\end{Corollary}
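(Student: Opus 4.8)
This corollary is a formal consequence of the Main Theorem together with the classical structure of modular forms modulo~$\ell$; there is no serious obstacle, only one point requiring care, which I indicate at the end.

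First I would recall the relevant facts about~$\widetilde M$. Since $\ell\ge 5$ one has $E_{\ell-1}\equiv 1\bmod\ell$, and hence (see~\cite{Serre}) the weight of a nonzero level~$1$ modular form modulo~$\ell$ is well defined only modulo $\ell-1$, but \emph{is} well defined there: if $f$ and~$g$ are level~$1$ modular forms of weights $k$ and~$k'$ with $\widetilde f=\widetilde g\not=0$, then $k\equiv k'\bmod\ell-1$. Consequently $\widetilde M=\bigoplus_{t\bmod\ell-1}\widetilde M^t$, where $\widetilde M^t$ is the $\F_\ell$-span of the reductions of all level~$1$ modular forms of weight $\equiv t\bmod\ell-1$, the directness of the sum being exactly the italicized statement.

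Second I would show that $\widetilde{\Theta(\ell^{\infty})}$ is spanned, as an $\F_\ell$-vector space, by the individual reductions $\widetilde{\theta_F}$, each of which is homogeneous for the grading of~$\widetilde M$. Indeed $\widetilde{\Theta(\ell^{\infty})}$ is spanned by products $\widetilde{\theta_{F_1}}\cdots\widetilde{\theta_{F_k}}$, and such a product equals $\widetilde{\theta_{F_1\oplus\cdots\oplus F_k}}$, the reduction of the theta series of the orthogonal direct sum; the latter is again an even lattice whose level, the least common multiple of the levels of the~$F_i$, is a power of~$\ell$. Hence $\widetilde{\Theta(\ell^{\infty})}=\sum_F\F_\ell\,\widetilde{\theta_F}$, the sum over all even lattices $F$ with level a power of~$\ell$; and by the Main Theorem each $\widetilde{\theta_F}$ equals~$\widetilde g$ for a level~$1$ modular form $g$ of weight $e(F)/2$, so that $\widetilde{\theta_F}\in\widetilde M^t$ with $t\equiv e(F)/2\bmod\ell-1$.

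Finally, setting $\widetilde{\Theta(\ell^{\infty})}^t:=\sum\F_\ell\,\widetilde{\theta_F}$ with the sum over those~$F$ with $e(F)/2\equiv t\bmod\ell-1$, the previous paragraph gives $\widetilde{\Theta(\ell^{\infty})}=\sum_{t\bmod\ell-1}\widetilde{\Theta(\ell^{\infty})}^t$, and this sum is direct because $\widetilde{\Theta(\ell^{\infty})}^t\subseteq\widetilde M^t$ and $\bigoplus_t\widetilde M^t$ is direct. That this decomposition is an algebra grading then follows by bilinearity from $\widetilde{\theta_F}\,\widetilde{\theta_G}=\widetilde{\theta_{F\oplus G}}$, once one knows $e(F\oplus G)=e(F)+e(G)$. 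This additivity of~$e$ is the one point that requires an argument, and it is where I expect a reader to pause, since it fails for integral lattices in general: here, however, the determinant of every Gram matrix in question is a power of~$\ell$ (it divides a power of the level, which is a power of~$\ell$), so every elementary divisor of every such Gram matrix is a power of~$\ell$, and for a block-diagonal integral matrix all of whose elementary divisors are powers of a single prime the multiset of elementary divisors is the union of the multisets belonging to the blocks; summation then yields $e(F\oplus G)=e(F)+e(G)$.
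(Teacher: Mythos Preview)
Your proof is correct and follows the same line as the paper's: both obtain the directness of the decomposition from the Swinnerton--Dyer grading $\widetilde M=\bigoplus_t\widetilde M^t$ via the inclusion $\widetilde{\Theta(\ell^\infty)}^t\subseteq\widetilde M^t$ furnished by the Main Theorem. You are in fact more thorough than the paper, which leaves implicit both the spanning of $\widetilde{\Theta(\ell^\infty)}$ by individual $\widetilde{\theta_F}$ and the multiplicativity of the grading (your observation that $e(F\oplus G)=e(F)+e(G)$ because all elementary divisors are $\ell$-powers).
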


\begin{proof}[Proof of Corollaries~\ref{cor-filtration} and~\ref{cor-grading}]
  It is well-known that $M=\Zloc [(\ell)][E_4,E_6]$ and that
  $\widetilde M$ is isomorphic to $\F_\ell[X,Y]/(A-1)$ via the map
  $p(X,Y)\mapsto p(\widetilde{E_4},\widetilde{E_6})$, where $A$
  denotes the polynomial such that $E_{\ell-1}=A(E_4, E_6)$ (see
  \cite[Theorem~2]{Sw-D}).  Moreover, $E_4=\theta_{E_8}$, where $E_8$
  is the unique irreducible root lattice of dimension~$8$, in
  particular, $E_4$ is in $\Theta(1)$.

  For the proof of Corollary~\ref{cor-filtration} it thus suffices to
  show that (i)~$\widetilde{E_6}$ is in $\widetilde{\Theta(1)}$ if $\ell
  \equiv 3\bmod 4$, and that, for $\ell \equiv 1\bmod 4$,
  (ii)~$\widetilde{E_6}$ is in $\widetilde{\Theta(\ell)}$, and
  (iii)~there exists a $\theta$ in $\widetilde{\Theta(\ell)}$, which
  is not in $\widetilde{\Theta(1)}$.

  Using the fact that every (positive definite) even unimodular
  lattice has rank divisible by $8$, that $E_4^{k - l}\Delta^l$ ($0\le
  l \le \lfloor \frac k3 \rfloor$) is a $\Zloc$-basis of $M_{4k}$, and
  that, for the theta series $\theta_{\text{Leech}}$
  associated to Leech's lattice , we have
  $\theta_{\text{Leech}} = E_4^3-720 \Delta$, we find
  \[
  \Theta(1) = \bigoplus_{k\ge0} M_{4k} = \Zloc
  [(\ell)][\theta_{\text{Leech}}, \theta_{E_8}]
  \]
  (provided $\ell$ does not divide $720=2^4 \cdot 3^2 \cdot 5$).

  From this (i) follows immediately since $\widetilde{E_6} =
  \widetilde{E_6E_{\ell-1}}$ is in $\widetilde{M_{\ell+5}}$, and
  since, for $\ell \equiv 3 \bmod 4$, we have $M_{\ell+5}\subseteq
  \Theta(1)$.

  For (iii) we use another result of
  Swinnerton-Dyer~\cite[Theorem~2]{Sw-D}, namely
  \[
  \widetilde M = \bigoplus_{t\bmod \ell-1} \widetilde M^t ,
  \]
  where $\widetilde M^t$ is the sum of all $\widetilde {M_k}$ with
  $k\equiv t\bmod \ell-1$.  Now, if $\lat$ is an even rank~4 lattice
  of level $\ell$ and determinant $\ell^2$, the series $\theta_{\lat}$
  is in $\Theta(\ell)$ and, by the main theorem,
  $\widetilde\theta_{\lat}$ is in
  $\widetilde{M}^{l+1}=\widetilde{M}^2$. But then
  $\widetilde\theta_{\lat}$ is not in $\widetilde{\Theta(1)}$ since,
  by the preceding decompositions of $\Theta(1)$ and $\widetilde M$,
  the space $\widetilde{\Theta(1)}$, for~$\ell\equiv 1 \bmod 4$,
  equals the sum of those $\widetilde M^t$ where $t$ is divisible by
  $4$.

  By the main theorem $\widetilde{\Theta(\ell^\infty)}^t$ is contained
  in $\widetilde M^t$. Corollary~\ref{cor-grading} follows therefore
  from the decomposition of $\widetilde M$ of in the preceding
  paragraph.

  The proof of (ii) is more difficult. Let $\lat$ be an even lattice
  of rank 12 with level $\ell$ and whose determinant is a perfect
  square $\ge \ell^4$, say, equal to $\ell^{2n}$ (one may take the
  threefold direct sum of a suitable even quaternary lattice).  Then
  $\theta_{\lat}$ is a modular form of weight $6$ on $\Gamma_0(\ell)$
  with trivial character. We may therefore consider its trace
  \[
  \theta(z):=\sum_{A\in
    \Gamma_0(\ell)\backslash\SL}\theta_{\lat}(Az)(cz+d)^{-6}
  =\theta_{\lat}(z) + \sum_{t\bmod
    \ell}\theta_{\lat}\big(-1/(z+t)\big)\,(z+t)^{-6} ,
  \]
  which is a modular form of level 1, and equals hence a multiple of
  $E_6$.  Applying Poisson's summation formula to obtain
  \[
  \theta_{\lat}(-1/z)\,z^{-6} = -\ell^{-n}\sum_{x\in\dual L}e^{\pi i
    z b(x,x)} ,
  \]
  one finds
  \[
  \theta = \theta_{\lat} - \ell^{1-n}\sum_{\begin{subarray}{c} x\in\dual L\\
      b(x,x) \in \Z\end{subarray} } q^{\frac12 b(x,x)} ,
  \]
  in particular, $ \theta = (1-\ell^{1-n})E_6$. Here $\dual L$ denotes
  the set of all $y$ in $\Q\otimes L$ such that $b(y,x)$ is integral
  for all $x$ in $L$ (and where of course, $b$ has to be bilinearly
  extended to $\Q\otimes L$). From this we deduce
  \[
  E_6 \equiv \sum_{\begin{subarray}{c} x\in\dual L\\
      b(x,x) \in \Z\end{subarray} } q^{\frac12 b(x,x)} \bmod \ell .
  \]
  But the right hand side can be rewritten as
  \[
  \sum_{\begin{subarray}{c}u\in\Pro(\dual L/L)\\ \underline b(u,u)=0
    \end{subarray}
  } \theta_{\lat_u} - \left(\left|\{u\in\Pro(\dual L/L):\underline
      b(u,u)=0\}\right| - 1\right) \theta_{\lat} ,
  \]
  where $\Pro(\dual L/L)$ denotes the set of 1-dimensional subspaces
  of the $\F_\ell$-vector space $\dual L/L$, where $\underline b:
  \dual L/L\times \dual L/L \rightarrow \Q/\Z$ denotes the bilinear
  form induced by $b$, and where, for $u$ in $\Pro(\dual L/L)$, we use
  $\lat_u$ for the lattice with underlying module $\{x\in\dual L: x+L
  \in u\}$ and the corresponding restriction of $b$ as bilinear form.
  Note that $\lat_u$, for $\underline b(u,u)=0$, is an even integral
  lattice of level $\ell$ (here we use $\ell\not=2$). We conclude that
  $\widetilde{E_6}$ is indeed an element of
  $\widetilde{\Theta(\ell)}$.
\end{proof}

There is a final, almost trivial consequence of the main theorem which
might be noteworthy. Namely, if $\lat = \latdef Lb$ is an even lattice
and $\sigma$ an automorphism of $\lat$, then we may consider the fixed
lattice $\lat^\sigma$. It is easy to see that $\theta_{\lat}$ and
$\theta_{\lat^\sigma}$ are congruent modulo $\ell$ if the order of
$\sigma$ is a power of $\ell$
(cf.~Theorem~\ref{thm-automorphism-congruence} below). If,
furthermore, the level of $\lat^\sigma$ is a power of $\ell$ then we
may apply the main theorem to conclude that
$\widetilde{\theta_{\lat}}$ is the reduction modulo~$\ell$ of a a
modular form~$f$ of level~1.  (For a discussion of the level of
$\lat^\sigma$ in general see Lemma~\ref{lem-index-of-fixed-lattice} in
section~\ref{sec:lattices-and -automorphism}).  By the discussion
following the main theorem we know that the weight $k$ of $f$ is
congruent modulo $\frac{\ell-1}2$ to $\frac r2$, where $r$ is the rank
of $\lat^\sigma$, and that $r$ is even. The characteristic polynomial
of $\sigma$ is of the form
$(t-1)^r\phi_{l^{n_1}}(t)\dots\phi_{l^{n_t}}(t)$ (where~$\phi_h$ is
the $h$-th cyclotomic polynomial), and hence the rank $n$ of $\lat$ is
congruent modulo $\ell-1$ to $r$. In particular, $n$ is even.  We have
therefore proved:
 
\begin{Corollary}
  Let $\lat$ be an even lattice of rank $n$ which possesses an
  automorphism $\sigma$ such that its order and the level of the fixed
  lattice~$\lat^\sigma$ are powers of~$\ell$. Then there exists a
  modular form of level 1, weight $k\equiv \frac n2 \bmod
  \frac{\ell-1}2$ with integral Fourier coefficients such that
  $\theta_{\lat}\equiv f \bmod \ell$.
\end{Corollary}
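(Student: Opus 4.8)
The plan is to deduce this from the Main Theorem together with the automorphism congruence already recorded as Theorem~\ref{thm-automorphism-congruence}, so that nothing essentially new has to be proved. First I would apply Theorem~\ref{thm-automorphism-congruence}: since the order of $\sigma$ is a power of $\ell$, the theta series of $\lat$ and of its fixed lattice agree modulo $\ell$, i.e.\ $\theta_{\lat}\equiv\theta_{\lat^\sigma}\bmod\ell$. Next I would observe that $\lat^\sigma$ is again an even integral lattice --- it is simply $\lat$ with $b$ restricted to the submodule $L^\sigma$ --- and that, by hypothesis, its level is a power of $\ell$. The Main Theorem therefore applies to $\lat^\sigma$ and yields a modular form $f$ of level $1$, weight $e(\lat^\sigma)/2$, and with integral Fourier coefficients such that $\theta_{\lat^\sigma}\equiv f\bmod\ell$. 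Chaining the two congruences gives $\theta_{\lat}\equiv f\bmod\ell$, which is the first assertion.

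It then remains only to pin down the weight $k=e(\lat^\sigma)/2$ modulo $(\ell-1)/2$. For this I would invoke the congruence for $e(\lat^\sigma)/2$ established in the discussion following the Main Theorem: writing $r$ for the rank of $\lat^\sigma$, that congruence reads $e(\lat^\sigma)/2\equiv r/2\bmod\ell-1$ or $e(\lat^\sigma)/2\equiv(r+\ell-1)/2\bmod\ell-1$ according as a Gram determinant of $\lat^\sigma$ is or is not a perfect square. Reducing modulo $(\ell-1)/2$ the two cases coincide, so in either case $k\equiv r/2\bmod(\ell-1)/2$; note $r$ is even here, since the Gram determinant of an even lattice of $\ell$-power level is odd.

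Finally I would relate $r$ to $n$ through the characteristic polynomial $\chi$ of $\sigma$. Since $\sigma$ has $\ell$-power order, it is annihilated by $t^{\ell^m}-1$ for some $m$, and this polynomial is separable over $\Q$; hence $\sigma$ is semisimple and $\chi$ is a product of cyclotomic polynomials $\phi_{\ell^j}$ with $0\le j\le m$. The factor $\phi_1=t-1$ occurs in $\chi$ with multiplicity $\dim_\Q(L^\sigma\otimes\Q)=r$ (using that $L^\sigma$ is saturated in $L$), while every other factor $\phi_{\ell^j}$ with $j\ge 1$ has degree $\ell^{j-1}(\ell-1)$, a multiple of $\ell-1$. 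Comparing degrees gives $n\equiv r\bmod\ell-1$; in particular $n\equiv r\bmod 2$, so $n$ is even, and dividing by $2$ we obtain $k\equiv r/2\equiv n/2\bmod(\ell-1)/2$, as required.

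The hard part is not in this corollary at all: the substantial inputs --- the Main Theorem and Theorem~\ref{thm-automorphism-congruence} --- are proved elsewhere in the paper. The only point that needs a moment's care is the shape of $\chi$, namely that the multiplicity of the $(t-1)$-factor equals the rank of the fixed lattice (which follows from the semisimplicity of $\sigma$ over $\Q$ and from $L/L^\sigma$ being torsion-free) and that the remaining cyclotomic factors contribute degrees divisible by $\ell-1$.
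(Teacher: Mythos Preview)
Your proof is correct and follows essentially the same route as the paper's: the paper also chains Theorem~\ref{thm-automorphism-congruence} with the Main Theorem applied to $\lat^\sigma$, invokes the weight congruence from the discussion following the Main Theorem, and then relates $r$ to $n$ via the factorization of the characteristic polynomial of $\sigma$ into $(t-1)^r$ times cyclotomic polynomials $\phi_{\ell^{n_j}}$. You have actually supplied a bit more justification than the paper does (the semisimplicity of $\sigma$ over $\Q$ and the saturation of $L^\sigma$ in $L$), but the argument is the same.
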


%%%%%%%%%%%%%%%%%%%%%%%%%%%%%%%%%%%%%%%%%%%%%%%%%%%%%%%%%%%%%%%%%%%%%%%%%%%%%
\section{Proof of the Main Theorem}
\label{sec:lattices-and -automorphism}

The proof of the main theorem is suggested by two observations, which
we formulate here as Theorems~\ref{thm-automorphism-congruence}
and~\ref{thm-theta-representation-mod-l}. The first theorem is
well-known (however, we do not know any precise reference).

\begin{Theorem}
  \label{thm-automorphism-congruence}
  Let $\lat$ be an even integral lattice which possesses an
  automorphism~$\sigma$ whose order is a power of~$\ell$, and let
  $\lat^\sigma$ be the sublattice of elements fixed by~$\sigma$.  Then
  \[
  \theta_{\lat} \equiv \theta_{\lat^\sigma} \bmod \ell .
  \]
\end{Theorem}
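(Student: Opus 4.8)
The plan is to compare the two theta series coefficient by coefficient and to exploit the fact that the cyclic group $\langle\sigma\rangle$, being of order a power of $\ell$, acts on each ``shell'' of the lattice with orbits whose cardinalities are powers of $\ell$. Concretely, for a nonnegative integer $n$ put
\[
S_n = \{x\in L : \tfrac12 b(x,x) = n\},\qquad S_n^\sigma = S_n\cap L^\sigma .
\]
Since $b$ is positive definite each $S_n$ is finite, so the coefficient of $q^n$ in $\theta_{\lat}$ is $|S_n|$ and, as $\lat^\sigma$ carries the restriction of $b$ (which is again even), the coefficient of $q^n$ in $\theta_{\lat^\sigma}$ is $|S_n^\sigma|$. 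It therefore suffices to prove $|S_n|\equiv |S_n^\sigma|\bmod \ell$ for every $n$.

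First I would observe that $\sigma$ maps $S_n$ to itself: as an automorphism of $\lat$ it preserves $b$, so $b(\sigma x,\sigma x)=b(x,x)$. Hence $\langle\sigma\rangle$ acts on the finite set $S_n$, which thus decomposes into $\langle\sigma\rangle$-orbits. The stabilizer of any $x\in S_n$ is a subgroup of the cyclic $\ell$-group $\langle\sigma\rangle$, hence cyclic of order $\ell^j$ for some $j$; consequently the orbit of $x$ has cardinality $\ell^{m-j}$, where $\ell^m=|\langle\sigma\rangle|$. Such an orbit is a singleton exactly when $j=m$, i.e.\ exactly when $x$ is fixed by $\sigma$, and in every other case its cardinality is divisible by $\ell$.

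Writing $S_n$ as the disjoint union of its $\langle\sigma\rangle$-orbits, the singleton orbits together contribute exactly $|S_n^\sigma|$, while each of the remaining orbits contributes a multiple of $\ell$; hence $|S_n|\equiv |S_n^\sigma|\bmod \ell$. Summing $\bigl(|S_n|-|S_n^\sigma|\bigr)q^n$ over $n\ge 0$ then shows $\theta_{\lat}-\theta_{\lat^\sigma}\in\ell\,\Z\pseries q$, which is the assertion.

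There is no genuine obstacle here; the argument is essentially the orbit-counting lemma for $p$-groups. The only points that deserve a moment's care are: the finiteness of each shell $S_n$ (so that both sides are well-defined formal power series with integer coefficients), the verification that $\sigma$ really stabilizes each $S_n$, and the elementary group theory ensuring that every orbit of a cyclic $\ell$-group has cardinality a power of $\ell$ — together with the remark that $L^\sigma$ inherits an even integral form so that $\theta_{\lat^\sigma}$ makes sense as stated.
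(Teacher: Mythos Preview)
Your proof is correct and is essentially the same argument as the paper's: both reduce to showing $|S_n|\equiv|S_n^\sigma|\bmod\ell$ via the orbit decomposition of the $\ell$-group $\langle\sigma\rangle$ acting on each shell, noting that non-singleton orbits have size divisible by $\ell$. You have simply spelled out a few more of the routine verifications (finiteness of shells, invariance of $S_n$ under $\sigma$) than the paper does.
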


\begin{proof}
  For a nonnegative integer $n$ let $X$ and $X^\sigma$ denote the set
  of all $x$ in~$\lat$ respectively $x$ in $\lat^\sigma$ such that
  $b(x,x)=2n$, where $b$ is the bilinear form of $\lat$. We have to
  show $|X| \equiv |X^\sigma| \bmod \ell$. But this is an immediate
  consequence of the orbit formula
  \[
  |X| = \sum_{x} [\langle \sigma\rangle:\sym{Stab}(x)] .
  \]
  Here $x$ runs through a complete set of representatives for the
  orbits in~$\langle\sigma\rangle\backslash X$ and $\sym{Stab}(x)$
  denotes the subgroup of elements in~$\langle \sigma\rangle$ fixing
  $x$.
\end{proof}

The second theorem concerns the Weil representation of an even lattice
with automorphism of $\ell$-power order.  For a given even lattice
$\lat = \latdef Lb$ of level $s$ and rank~$2k$ we let
$O_{\lat}=\Z[\zeta,1/\chi_{\lat}]$. Here $\zeta$ is a primitive $s$-th
root of unity and
\[
\chi_{\lat} = \sum_{\rho\in \sym{Det}(L)}\exp(2\pi i \, \underline
q(\rho)) ,
\]
where $\sym{Det}(\lat)=\dual L/L$ is the {\it determinant module
  of}~$\lat$, and where we use~$\underline q$ for the map (finite
quadratic form) $\underline q:\sym{Det}(\lat) \rightarrow \Q/\Z$
induced by $x\mapsto~\frac12 b(x,x)$.  Thus $O_{\lat}$ is a subring of
the cyclotomic field~$\Q(\zeta)$. Note that one has
$\chi_{\lat}=e^{\pi i k/2}|\sym{Det}(\lat)|^{\frac 12}$ (this identity
is sometimes called Milgram's theorem). We let~$W_{\lat}$ be the
$O_{\lat}$-submodule of $O_{\lat}\pseries{q^{\frac1s}}$ spanned by the
series $\theta_\rho := \sum_{x\in \rho}q^{\frac12 b(x,x)}$, where
$\rho$ runs through $\sym{Det}(\lat)$. It is well-known~\cite{Kl} that
$(\theta,A)\mapsto \theta|_{k}A$ defines a right action of~$\SL$
on~$W_{\lat}$ (provided~$k$ is integral).  Here we view the elements
of $W_{\lat}$ as functions of a variable~$z$ in the complex upper half
plane by setting $q=\exp(2\pi iz)$, and we use
$\big(f|_k\left(\begin{smallmatrix}a&b\\c&d\end{smallmatrix}\right)\big)(z)
= f\big(\frac{az+b}{cz+d}\big)\,(cz+d)^{-k}$.

Finally, if $\sigma$ denotes an automorphism of the (even) lattice
$\lat$, then, by linear extension, $\sigma$ acts naturally on
$\Q\otimes_\Z L$ and on $\sym{Det}(\lat)$.  We then have

\begin{Theorem}
  \label{thm-theta-representation-mod-l}
  Let $\lat$ be an even lattice of rank $2k$ which possesses an
  automorphism whose order is a power of $\ell$. Suppose that
  $\sym{Det}(\lat)^\sigma = 0$. Then~$k$ is even integral, and one has
  \[
  \theta_{\lat}|_{k}A \equiv \theta_{\lat} \bmod \ell W_{\lat}
  \]
  for all $A$ in $\SL$.
\end{Theorem}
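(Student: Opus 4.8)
The plan is to work in the reduction $\overline W:=W_{\lat}/\ell W_{\lat}$, a module over $\overline O:=O_{\lat}/\ell O_{\lat}$ which inherits the right $\SL$-action because the slash action is $O_{\lat}$-linear, and to show that the class $\overline{\theta_{\lat}}$ of $\theta_{\lat}$ is fixed by $S=\left(\begin{smallmatrix}0&-1\\1&0\end{smallmatrix}\right)$ and $T=\left(\begin{smallmatrix}1&1\\0&1\end{smallmatrix}\right)$, which generate $\SL$. The arithmetic input is a single counting observation. Since $\sigma$ is an automorphism of $\lat$, it permutes the cosets $\rho\in\sym{Det}(\lat)=\dual L/L$ and fixes $\underline q$ and $\underline b$; and reindexing the summation variable by $x\mapsto\sigma x$ shows $\theta_{\sigma\rho}=\theta_\rho$ in $O_{\lat}\pseries{q^{1/s}}$, so that $\theta_\rho$ depends only on the $\langle\sigma\rangle$-orbit of $\rho$; write $\theta_{\mathcal O}$ for the common value on an orbit $\mathcal O$. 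Because $\langle\sigma\rangle$ is an $\ell$-group and $\sym{Det}(\lat)^\sigma=0$, the orbit of $0$ is $\{0\}$ while every other orbit has cardinality divisible by $\ell$. Counting $\sym{Det}(\lat)$ by orbits gives $|\sym{Det}(\lat)|\equiv1\bmod\ell$, and since $\underline q$ is constant on orbits the definition of $\chi_{\lat}$ yields
\[
\chi_{\lat}=1+\sum_{\mathcal O\neq\{0\}}|\mathcal O|\exp\big(2\pi i\,\underline q(x_{\mathcal O})\big)\equiv1\bmod\ell\Z[\zeta] ,
\]
where $x_{\mathcal O}$ runs through orbit representatives.

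I would next settle the claim that $k$ is even integral. That $k$ is integral, i.e.\ that $\lat$ has even rank, follows from the structure of an automorphism of $\ell$-power order together with $\sym{Det}(\lat)^\sigma=0$: as in Lemma~\ref{lem-index-of-fixed-lattice}, the fixed lattice $\lat^\sigma$ splits off orthogonally as an even unimodular sublattice, whose rank is therefore divisible by $8$, while $\sigma$ acts on the orthogonal complement with eigenvalues primitive $\ell^j$-th roots of unity, so that this complement has even rank. Granting $k\in\Z$, evenness is forced by Milgram's theorem $\chi_{\lat}^2=(-1)^k|\sym{Det}(\lat)|$ together with the display above: the left side lies in $1+\ell\Z[\zeta]$, hence the rational integer $(-1)^k|\sym{Det}(\lat)|$ is $\equiv1\bmod\ell$, and since $|\sym{Det}(\lat)|\equiv1\bmod\ell$ this gives $(-1)^k\equiv1\bmod\ell$, so $(-1)^k=1$ as $\ell\ge5$.

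Now I would verify the congruence on the two generators. Since $\underline q(0)=0$ one has $\theta_{\lat}|_kT=\theta_{\lat}$, so $\overline{\theta_{\lat}}$ is $T$-fixed. For $S$, Poisson summation — equivalently, the standard transformation formula for the components of the Weil representation — together with Milgram's theorem gives
\[
\theta_{\lat}|_kS=\chi_{\lat}^{-1}\sum_{\rho\in\sym{Det}(\lat)}\theta_\rho=\chi_{\lat}^{-1}\Big(\theta_{\lat}+\sum_{\mathcal O\neq\{0\}}|\mathcal O|\,\theta_{\mathcal O}\Big) .
\]
By the orbit observation the last sum lies in $\ell W_{\lat}$, and $\chi_{\lat}^{-1}\equiv1\bmod\ell O_{\lat}$ because $\chi_{\lat}$ is a unit of $O_{\lat}$ congruent to $1$; hence $\theta_{\lat}|_kS\equiv\theta_{\lat}\bmod\ell W_{\lat}$, i.e.\ $\overline{\theta_{\lat}}$ is $S$-fixed. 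Since $S$ and $T$ generate $\SL$, the class $\overline{\theta_{\lat}}$ is fixed by every $A\in\SL$, which is the assertion.

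The only step that is more than bookkeeping is the parity of $k$: one has to pass from the congruence $\chi_{\lat}\equiv1$ to a statement about $(-1)^k$, and Milgram's theorem is the lever that does this. Everything else rests on the orbit count — the point of inverting $\chi_{\lat}$ in the definition of $O_{\lat}$ being exactly that $\theta_{\lat}|_kS$ then already has its coefficients in $O_{\lat}$, so that the divisible orbit sums can be discarded modulo $\ell W_{\lat}$ — together with the two classical transformation laws of $\theta_{\lat}$ under $T$ and $S$; the integrality of $k$ is an input from the theory of fixed sublattices (cf.\ Lemma~\ref{lem-index-of-fixed-lattice}).
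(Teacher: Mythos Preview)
Your approach is essentially the paper's: pass to the generators $T$ and $S$, use the orbit decomposition of $\sym{Det}(\lat)$ under $\langle\sigma\rangle$ together with the transformation law $\theta_{\lat}|_kS=\chi_{\lat}^{-1}\sum_\rho\theta_\rho$, and invoke Milgram's formula for the parity of $k$. The $S$- and $T$-arguments and the congruences $\chi_{\lat}\equiv 1$, $|\sym{Det}(\lat)|\equiv 1$ are exactly as in the paper.

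There is, however, a genuine error in your treatment of the integrality of $k$. You claim that ``as in Lemma~\ref{lem-index-of-fixed-lattice}, the fixed lattice $\lat^\sigma$ splits off orthogonally as an even unimodular sublattice, whose rank is therefore divisible by $8$''. Lemma~\ref{lem-index-of-fixed-lattice} says no such thing, and the claim is false. Under the hypothesis $\sym{Det}(\lat)^\sigma=0$ the lemma yields $(\dual L)^\sigma=L^\sigma$ and that $|\sym{Det}(\lat^\sigma)|$ is a \emph{power of $\ell$}, not that it equals $1$. Indeed, in the paper's own construction for the Main Theorem one has $\widehat\lat^\sigma\cong\lat$ with $\lat$ an arbitrary even lattice of $\ell$-power level, so $\widehat\lat^\sigma$ is typically not unimodular at all.

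Two repairs are available. You can salvage your line of argument by observing that $\lat^\sigma$ is even with \emph{odd} determinant (a power of $\ell$), and an even lattice with odd determinant has even rank (its Gram matrix is alternating modulo $2$, hence nonsingular mod $2$ forces even size); combined with your correct remark that the complement has even rank, this gives $k\in\Z$. But the paper's route is shorter and avoids the detour entirely: from $\chi_{\lat}\equiv 1\bmod\ell\Z[\zeta]$ and $|\sym{Det}(\lat)|\equiv 1\bmod\ell$ one gets, via $\chi_{\lat}^2=e^{\pi i k}|\sym{Det}(\lat)|$, that $e^{\pi i k}\equiv 1\bmod\ell\Z[\zeta]$. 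Since $e^{\pi i k}$ is a fourth root of unity and $\ell\ge 5$, this forces $e^{\pi i k}=1$, i.e.\ $4\mid 2k$, yielding integrality and evenness of $k$ simultaneously.
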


\begin{proof}
  The action of $\SL$ on $W_{\lat}$ induces an action on the quotient
  $W_{\lat}/\ell W_{\lat}$, and the theorem states that $\theta_{\lat}
  + \ell W_{\lat}$ is invariant under this action.  It suffices to
  show this invariance for the generators
  $T=\left(\begin{smallmatrix}1&1\\0&1
    \end{smallmatrix}\right)$ and
  $S=\left(\begin{smallmatrix}0&-1\\1&0
    \end{smallmatrix}\right)$ of $\SL$.  The invariance under $T$
  is trivial. For showing the invariance under $S$ we use the formula
  \[
  \theta_{\lat}|_k S = \chi_{\lat}^{-1}\sum_{ \rho \in
    \sym{Det}(\lat)} \theta_{\rho} .
  \]
  (This formula follows from Poisson's summation formula,
  see~\cite{Kl} for details.)  Under this action of $\sigma$ on the
  determinant group $\sym{Det}(\lat)$ we have $\theta_{\sigma(\rho)} =
  \theta_\rho$.  Hence we can rewrite the preceding identity in the
  form
  \[
  \theta_{\lat}|_k S = \chi_{\lat}^{-1}\sum_{\rho} [\langle \sigma
  \rangle : \sym{Stab}(\rho)]\,\theta_\rho ,
  \]
  where~$\rho$ runs here through a complete set of representatives for
  the orbits in~$\langle \sigma \rangle \backslash \sym{Det}(\lat)$,
  and where, for each such $\rho$, we use $\sym{Stab}(\rho)$ for its
  stabilizer in~$\langle \sigma \rangle$. Similarly, we have
  \[
  \chi_{\lat}=\sum_{\rho} [\langle \sigma \rangle :
  \sym{Stab}(\rho)]\,e^{2\pi i \, \underline q(\rho)} .
  \]
  The theorem follows now from the fact that $0$ is the only element
  in $\sym{Det}(\lat)$ fixed by $\sigma$.

  Note that we have in particular proved $\chi_{\lat} \equiv 1 \bmod
  \ell\Z[\zeta]$, and the same argument implies $|
  \sym{Det}(\lat)|\equiv 1 \bmod \ell$. On the other hand, we have
  $\chi_{\lat}^2=e^{\pi i k}|\sym{Det}(\lat)|$.  We thus recognize
  that the rank $2k$ of $\lat$ must indeed be divisible by $4$ as
  claimed.
\end{proof}

The idea of proof of the main theorem is now apparent. Given a lattice
of $\ell$-power level we construct a lattice $\widehat L$ and an
automorphism $\sigma$ of $\ell$-power order such that $L$ is
isomorphic to the fixed lattice $\widehat\lat^\sigma$. Accordingly to
Theorem~\ref{thm-theta-representation-mod-l} one might expect then
that the theta series of $\widehat\lat$ is congruent modulo~$\ell$ to
a modular form of level 1, provided some additional assumptions on
$\widetilde \lat$ and the automorphism $\sigma$ hold true. Following
this idea we can indeed find a proof of the main theorem.  We postpone
the proof of the following theorem, which relies on a purely algebraic
property of quadratic forms, to the Appendix.

\begin{Theorem}
  \label{thm-embedding-as-fixed-module}
  For every even lattice $\lat$ whose level is a power of~$\ell$ there
  exists an even lattice $\widehat\lat$ which possesses an
  automorphism $\sigma$ of $\ell$-power order such that the sublattice
  of $\widehat\lat$ fixed by $\sigma$ is isomorphic to $\lat$. The
  lattice $\widehat \lat$ can be chosen so that its rank equals
  $e(\lat)$ and such that its level is not divisible by~$\ell$ or any
  prime $p \equiv -1 \bmod \ell$.
\end{Theorem}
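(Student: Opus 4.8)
The plan is to build $\widehat\lat$ one elementary divisor at a time. By the structure theorem for finite abelian groups, the determinant module $\sym{Det}(\lat) = \dual L/L$ is a direct sum of cyclic $\ell$-groups $\Z/\ell^{a_i}\Z$, and the elementary divisors of $\lat$ are, up to the unit part, the numbers $1$ and $\ell^{a_i}$; their sum is $e(\lat)$. So it is enough to realise each ``block'' separately and then take an orthogonal direct sum: if $\lat = \lat_1 \perp \cdots \perp \lat_m$ with each $\lat_j$ having determinant module cyclic of order $\ell^{a_j}$ (or unimodular, contributing a trivial rank-$1$ summand for each elementary divisor equal to $1$), and if we can embed each $\lat_j$ as $\widehat\lat_j{}^{\sigma_j}$ with $\sigma_j$ of $\ell$-power order, $\operatorname{rk}\widehat\lat_j = e(\lat_j)$, and level of $\widehat\lat_j$ coprime to $\ell$ and to all primes $p\equiv -1\bmod\ell$, then $\widehat\lat := \perp_j \widehat\lat_j$ with $\sigma := \oplus_j \sigma_j$ does the job: the fixed lattice of a direct sum is the direct sum of the fixed lattices, ranks and levels are multiplicative/additive in the right way, and $e(\perp_j\lat_j)=\sum_j e(\lat_j)$. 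So first I would reduce to the case $\sym{Det}(\lat)\cong\Z/\ell^a\Z$ (equivalently, $\lat$ has a single nontrivial elementary divisor $\ell^a$ and is otherwise unimodular, hence—after splitting off a unimodular summand, which only needs a trivial extension—to the case $\dual L/L$ cyclic of order $\ell^a$ with $\operatorname{rk}L$ as small as possible).

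Next, for such an ``indecomposable'' $\lat$, the construction should exploit the cyclotomic ring $\Z[\zeta_{\ell^a}]$. The idea is to glue $\ell-1$ copies of $\lat$ (or of $\lat$ and an auxiliary lattice of the same determinant module) along their determinant modules using the action of $(\Z/\ell^a\Z)^\times$, or, more precisely, to form the lattice induced from $\lat$ by tensoring the ``glue'' with $\Z[\zeta_{\ell^a}]$ viewed as a lattice over $\Z$ of rank $\phi(\ell^a)=\ell^{a-1}(\ell-1)$. Concretely: on $\dual L/L \cong \Z/\ell^a\Z$ the quadratic form $\underline q$ is multiplication by some unit $u/\ell^a$; multiplication by a primitive root mod $\ell^a$ is an automorphism of order $\phi(\ell^a)$ of the group but scales $\underline q$, so instead I would take the $\Z[\zeta]$-lattice whose underlying module is $L\otimes_{\Z}\Z[\zeta]$ with a suitably twisted form, and let $\sigma$ act by multiplication by $\zeta$. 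The fixed points of multiplication by $\zeta$ on $\Z[\zeta]$ form exactly $\Z$ (the trace-zero complement being $(1-\zeta)\Z[\zeta]$), so the fixed lattice recovers $\lat$ up to rescaling the form by the norm factor, which one compensates by the initial choice of twist. One computes $\operatorname{rk}\widehat\lat = \phi(\ell^a)\cdot\operatorname{rk}L$; for the minimal $\lat$ with cyclic determinant module of order $\ell^a$ this should come out to $e(\lat)$ (here the arithmetic of the rank formula $\operatorname{rk}L$ vs.\ the number and size of elementary divisors has to match—this is where the hypothesis $\ell\ge5$ and the parity constraints from the discussion after the Main Theorem enter). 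The level of $\widehat\lat$ is governed by the discriminant of $\Z[\zeta_{\ell^a}]$, which is a power of $\ell$, times the level of $\lat$ (a power of $\ell$), times possibly a factor coming from the twisting form; the claim that no prime $p\equiv-1\bmod\ell$ divides it is then a statement about which local quadratic forms can appear—over $\Q_p$ for such $p$, the form $\Z[\zeta]\otimes\Q_p$ splits favourably, so one has freedom to choose the twist unimodularly at $p$.

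The main obstacle, and the reason this theorem is deferred to an appendix, is making the last two sentences precise: getting the rank of $\widehat\lat$ to be \emph{exactly} $e(\lat)$ and not merely $\le$ some multiple, and controlling the level away from $\ell$ and from primes $\equiv-1\bmod\ell$. The rank issue forces one to choose, for each elementary divisor $\ell^a$, the \emph{smallest} even lattice with that determinant module—which by Nikulin-type existence results for lattices with prescribed discriminant form exists once one allows the discriminant form to be realised, and whose rank is $2$ (a binary form of discriminant $-\ell^a$ up to squares) when $a$ is ``odd-type'' and may be forced up to $4$ when the local Hasse invariants obstruct a binary realisation—exactly matching the unit/non-square dichotomy in the formula for $e(\lat)/2$ after the Main Theorem. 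The level control is then a purely local computation: at $\ell$ one accepts any power (that is allowed), at $p=2,3$ one must check nothing goes wrong (here $\ell\ge5$ is used so that $\ell$ is a unit at $2$ and $3$), and at $p\equiv-1\bmod\ell$ one must verify that the $\Z[\zeta_{\ell^a}]$-induction, localised at $p$, is $p$-unimodular—which holds because $p$ is unramified in $\Q(\zeta_{\ell^a})$ and its residue degree (the order of $p$ mod $\ell^a$) being even—since $p\equiv-1$—lets the local form pair up into hyperbolic planes. I expect the write-up of this last point to be the most delicate part of the appendix; everything else (the reduction to the cyclic case, the direct-sum bookkeeping, the identification of the fixed lattice with $\lat$) is formal.
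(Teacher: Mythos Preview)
Your proposal has a genuine misreading and several structural gaps that would prevent the argument from going through.

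\textbf{The level condition at $\ell$.} You write ``at $\ell$ one accepts any power (that is allowed)'', but the theorem requires the level of $\widehat\lat$ to be \emph{not} divisible by $\ell$. This is not a technicality: in the application, one needs $\sym{Det}(\widehat\lat)^\sigma=0$ via Lemma~\ref{lem-index-of-fixed-lattice}, which in turn needs $\ell\nmid|\sym{Det}(\widehat\lat)|$. Your cyclotomic-tensor construction, whose bilinear form is governed by the trace form on $\Z[\zeta_{\ell^a}]$, has discriminant a power of $\ell$ and will therefore produce a lattice whose level \emph{is} divisible by $\ell$. There is no twist that removes this.

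\textbf{The orthogonal splitting.} You assume $\lat=\lat_1\perp\cdots\perp\lat_m$ over $\Z$ with each $\sym{Det}(\lat_j)$ cyclic. Such a decomposition exists over $\Zloc$ but not in general over $\Z$; an even $\Z$-lattice need not split off a unimodular summand, and need not decompose according to the cyclic decomposition of its discriminant group.

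\textbf{The rank count.} Tensoring a rank-$r$ block with $\Z[\zeta_{\ell^a}]$ gives rank $r\cdot\phi(\ell^a)=r\,\ell^{a-1}(\ell-1)$, whereas $e(\lat)=\sum_i\ell^{\alpha_i}$. Already for a binary $\lat$ with elementary divisors $1,\ell$ one has $e(\lat)=\ell+1$ versus $2(\ell-1)$ from your construction; these disagree.

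\textbf{What the paper actually does.} The construction in the appendix is much more elementary than a cyclotomic induction, but the arithmetic input is deeper and lies elsewhere. One first diagonalises $\lat$ over the localisation $R=S^{-1}\Z$, where $S$ consists of integers whose prime factors are all $\not\equiv 0,-1\bmod\ell$; this is the hard step (Theorem~\ref{thm-diagonalization-over-localizations-of-Z}), and its proof reduces to showing that every primitive positive binary form represents an element of $S$, which is established via ray class groups of imaginary quadratic fields (Theorem~\ref{thm-ray-class-norms}). Once one has an orthogonal $R$-basis $e_i$ with $b(e_i,e_i)=a_i\ell^{\alpha_i}$ and $a_i\in R^*$, the lattice $\widehat\lat$ is built by replacing each $e_i$ by $\ell^{\alpha_i}$ mutually orthogonal vectors $e_{i,1},\dots,e_{i,\ell^{\alpha_i}}$ of norm $a_i$, cyclically permuted by $\sigma$; the diagonal recovers $e_i$ with the correct norm $\ell^{\alpha_i}a_i$, the rank is $\sum_i\ell^{\alpha_i}=e(\lat)$ on the nose, and the level is a unit in $R$ because all the $a_i$ are. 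The avoidance of primes $\equiv -1\bmod\ell$ is thus not a local computation on the output lattice but is baked into the choice of $R$ over which one diagonalises.
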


Finally, we still need a lemma which assures that a lattice $\widehat
L$ as in the preceding theorem satisfies the hypothesis
$\sym{Det}(\widehat\lat)^\sigma=0$ of
Theorem~\ref{thm-theta-representation-mod-l}.

\begin{Lemma}
  \label{lem-index-of-fixed-lattice}
  Let $\sigma$ be an automorphism of $\lat = \latdef Lb$ whose order
  is a power of~$\ell$.  There are canonical embeddings of $\big(\dual
  L\big)^\sigma/L^\sigma$ into $\sym{Det}(\lat^\sigma)$ and
  $\sym{Det}(\lat)^\sigma$. The images under these embeddings are
  subgroups whose index is a power of~$\ell$, respectively.  In
  particular, if $\ell$ does not divide the determinant of $\lat$,
  then $\big(\dual L\big)^\sigma/L^\sigma$ can be identified with
  $\sym{Det}(\lat)^\sigma$.
\end{Lemma}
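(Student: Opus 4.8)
The plan is to obtain both embeddings from the averaging idempotent attached to $\sigma$, and then to control the two indices by separate arguments: the first by an elementary estimate saying that averaging over $\langle\sigma\rangle$ costs at most a power of the order of $\sigma$, the second by a short computation with the cohomology of $\langle\sigma\rangle$.

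I would first set up notation: let $m$ be the order of $\sigma$, a power of $\ell$ by hypothesis, put $V=\Q\otimes_\Z L$, $V^\sigma=\Q\otimes_\Z L^\sigma$, and let $\varepsilon=\tfrac1m\sum_{j=0}^{m-1}\sigma^j$ be the projection of $V$ onto $V^\sigma$. Since $\sigma$ is an isometry, $\varepsilon$ is self-adjoint for the bilinearly extended form $b$, so $V=V^\sigma\perp(1-\varepsilon)V$; moreover $m\,\varepsilon x=\sum_j\sigma^j x$ lies in $L$ and is fixed by $\sigma$, hence $m\,\varepsilon x\in L^\sigma$ for all $x\in L$. For the first embedding I would observe directly that $L^\sigma\subseteq(\dual L)^\sigma\subseteq\dual{(L^\sigma)}$, where $\dual{(L^\sigma)}$ is the dual lattice of $\lat^\sigma$ computed inside $V^\sigma$: the left inclusion is trivial, and the right one holds because $b(y,x)\in\Z$ already for $x\in L^\sigma\subseteq L$. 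This gives the canonical injection $(\dual L)^\sigma/L^\sigma\hookrightarrow\sym{Det}(\lat^\sigma)$, whose cokernel has order $[\dual{(L^\sigma)}:(\dual L)^\sigma]$. To see that this is a power of $\ell$, I would note that for $y\in\dual{(L^\sigma)}$ and $x\in L$ self-adjointness gives $b(my,x)=m\,b(y,\varepsilon x)=b(y,m\,\varepsilon x)\in\Z$ because $m\,\varepsilon x\in L^\sigma$; hence $m\,\dual{(L^\sigma)}\subseteq(\dual L)^\sigma\subseteq\dual{(L^\sigma)}$, so that index divides $m^{\operatorname{rank}L^\sigma}$.

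For the second embedding I would use that $\sigma$ preserves $\dual L$ and that the reduction map $(\dual L)^\sigma\to\dual L/L=\sym{Det}(\lat)$ has kernel $(\dual L)^\sigma\cap L=L^\sigma$, producing the canonical injection $(\dual L)^\sigma/L^\sigma\hookrightarrow\sym{Det}(\lat)^\sigma$. The cokernel of $(\dual L)^\sigma\to\sym{Det}(\lat)^\sigma$ embeds, through the connecting homomorphism in the long exact sequence of $\langle\sigma\rangle$-cohomology attached to $0\to L\to\dual L\to\sym{Det}(\lat)\to 0$, into $H^1(\langle\sigma\rangle,L)$. Since $\langle\sigma\rangle$ is finite cyclic of order $m$ and $L$ is finitely generated, $H^1(\langle\sigma\rangle,L)$ is a finite abelian group annihilated by $m$, hence an $\ell$-group, so $[\sym{Det}(\lat)^\sigma:(\dual L)^\sigma/L^\sigma]$ is a power of $\ell$. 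For the last assertion, if $\ell\nmid\det(\lat)$ then $|\sym{Det}(\lat)^\sigma|$ divides $|\sym{Det}(\lat)|=\det(\lat)$ and is prime to $\ell$, while the index just bounded is a power of $\ell$; hence it equals $1$ and the second embedding is an isomorphism.

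The genuinely routine ingredients are the self-adjointness of $\varepsilon$, the kernel identification $(\dual L)^\sigma\cap L=L^\sigma$, and the standard facts that the cohomology of a finite cyclic group acting on a finitely generated module is finite and is annihilated by the group order. I expect the only point requiring care to be organizing the two index estimates so that they come out as powers of $\ell$ rather than merely finite numbers; this is exactly where the hypothesis on the order of $\sigma$ enters, via the factor $m$ in $m\,\dual{(L^\sigma)}\subseteq(\dual L)^\sigma$ and via the $m$-torsion of $H^1(\langle\sigma\rangle,L)$.
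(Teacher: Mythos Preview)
Your proof is correct. The treatment of the first embedding and its index bound $m\,\dual{(L^\sigma)}\subseteq(\dual L)^\sigma$ is exactly the paper's argument, phrased with the idempotent $\varepsilon$ rather than the sum $s=m\varepsilon$; the key step $b(s(y),L)=b(y,s(L))\subseteq b(y,L^\sigma)\subseteq\Z$ is your self-adjointness computation in different notation.

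For the second index you take a slightly different route. The paper avoids cohomology and argues directly: if $y+L\in\sym{Det}(\lat)^\sigma$, then each $\sigma^j y-y$ lies in $L$, so $s(y)\equiv\ell^n y\bmod L$, while $s(y)\in(\dual L)^\sigma$; hence $\ell^n(y+L)$ already lies in the image, and the cokernel is killed by $\ell^n$. Your argument via the connecting map into $H^1(\langle\sigma\rangle,L)$ reaches the same conclusion but wraps this elementary averaging step inside standard machinery. The paper's version is shorter and fully self-contained; yours has the advantage of making transparent that the index bound is an instance of the general fact that cohomology of a finite group is annihilated by its order.
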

\begin{proof}
  Let $\ell^n$ denote the order of $\sigma$. We set $V=\Q\otimes_\Z L$
  and extend $b$ to a bilinear form on $V$. For a finitely generated
  $\Z$-submodule $M$ of $V$ we use $M^*$ for the set of $y$ in $\Q M$
  such that $b(y,M)\subset \Z$. (We have of course $L^*=\dual L$ with
  $\dual L$ as already used before.) Then $\sym{Det}(\lat^\sigma)$ can
  be identified with $(L^\sigma)^*/L^\sigma$. The natural embeddings
  of the theorem are given by the inclusion of $\big(\dual
  L\big)^\sigma/L^\sigma$ in $(L^\sigma)^*/L^\sigma$ and by the
  natural map $x + L^\sigma \mapsto x + L$.

  If $y$ is in $\big(L^\sigma\big)^*$ then $\sigma(y)=y$, hence
  $\ell^n y = s(y), $ where $s =\sum_{\tau\in\langle \sigma\rangle}
  \tau$. But
  \[
  b\big(s(y), L\big) = b\big(y , s(L)\big) \subseteq b\big(y,
  L^\sigma\big)\subseteq \Z .
  \]
  We conclude that~$\ell^n y$ is in $\big(\dual L\big)^\sigma$.

  Similarly, if $y + L$ is in $\sym{Det}(\lat)^\sigma$, then $\ell^n y
  \equiv s(y) \bmod L$, but $s(y)$ is in~$\big(\dual L\big)^\sigma$.
\end{proof}

\begin{proof}[Proof of the Main Theorem]
  Given a lattice $\lat$ whose level is a power of~$\ell$ we choose a
  lattice $\widehat\lat$ of rank~$2k$ and level~$s$ equipped with an
  automorphism~$\sigma$ as in
  Theorem~\ref{thm-embedding-as-fixed-module}. We choose
  $\widehat\lat$ such that $2k=e(\lat)$ and $s$ is not divisible
  by~$\ell$.  By Theorem~\ref{thm-automorphism-congruence} the
  series~$\theta_{\lat}$ is congruent
  to~$\theta:=\theta_{\widehat\lat}$ modulo~$\ell$.  Since~$\ell$ does
  not divide the determinant of $\widehat\lat$ and since the
  determinant of $\lat$ is a power of $\ell$ we conclude from the
  Lemma~\ref{lem-index-of-fixed-lattice} that $\sym{Det}(\lat)^\sigma
  =0$.  By Theorem~\ref{thm-theta-representation-mod-l} $k$ is even
  and we have $\theta|_kA \equiv \theta \bmod \ell
  O_{\lat}\pseries{q^{1/s}}$ for all $A\in\Gamma$.  It is well-known
  that $\theta$ is a modular form on $\Gamma_0(s)$ with a real
  character.  Because of the last congruence the character is trivial.
  The form $g:=\sum_A \theta|_k A$, with $A$ running through a
  complete set of representatives for $\Gamma_0(s)\backslash\Gamma$,
  is thus a modular form on~$\Gamma=\SL$.  But $g \equiv n \theta
  \bmod \ell O_{\lat}\pseries{q}$, where $n$ denotes the index
  of~$\Gamma_0(s)$ in $\Gamma$. Note that~$n=s\prod_{p|s}(1+\frac1p)$.

  If we write $g$ in the form $g=\sum c_{a,b} E_4^a\Delta^b$ or
  $g=\sum c_{a,b} E_4^aE_6\Delta^b$ (with $a,b$ running over all
  nonnegative integers such that $4a+12b=k$ in the first sum and
  $4a+12b=k-6$ in the second sum), we see that the coefficients
  $c_{a,b}$ are in~$O_{\lat}$, and that they are in fact congruent
  modulo~$\ell O_{\lat}$ to rational integers (since $g$ is congruent
  modulo $\ell O_{\lat}$ to $n\theta$).  Replacing the $c_{a,b}$ by
  these integers we can assume that $g$ has coefficients in $\Z$. But
  then $g \equiv n \theta \bmod \ell \Z \pseries{q}$ (since
  $\Z[\frac1\ell]\cap\ell O_{\lat} = \Z$).

  If we finally choose $\widehat\lat$ such that $s$ does not contain
  any primes congruent to~$-1$ modulo $\ell$, then $n$ is invertible
  modulo $\ell$ and we have proved the theorem.
\end{proof}

%%%%%%%%%%%%%%%%%%%%%%%%%%%%%%%%%%%%%%%%%%%%%%%%%%%%%%%%%%%%%%%%%%%%%%%%%%%%%
\section*{Appendix}

In this section we prove Theorem~\ref{thm-embedding-as-fixed-module}.
We shall say that a lattice $\lat = \latdef Lb$ can be {\it
  diagonalized over a} subring $R$ of $\Q$ if $R\otimes_\Z L$ contains
an orthogonal $R$-basis, i.e.~an $R$-basis $x_i$ such that
$b(x_i,x_j)=0$ for all $i\not= j$. (Here and in the following we use
the same letter $b$ for the bilinear extension of $b$ to $R\otimes L$
as for $b$ itself.)  It is easy to see that every lattice can be
diagonalized over~$\Zloc$.

\begin{Lemma}
  \label{lem-embedding-as-fixed-module}
  Let $\lat$ be an even lattice whose level is a power of $\ell$.
  Assume that $R$ is a localization of $\Z$ contained in $\Zloc$ such
  that $\lat$ can be diagonalized over~$R$. Then there exists a
  lattice $\widehat\lat$ which possesses an automorphism $\sigma$ of
  $\ell$-power order such that the sublattice of $\widehat\lat$ fixed
  by $\sigma$ is isomorphic to $\lat$. The lattice $\widehat \lat$ can
  be chosen so that its rank equals $e(\lat)$ and such that its level
  is a unit in $R$.
\end{Lemma}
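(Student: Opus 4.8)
The plan is to realise $\lat$ verbatim as the sublattice of $\widehat\lat$ fixed by $\sigma$, by orthogonally pasting $\lat$ onto an auxiliary ``companion'' lattice that carries a fixed-point-free automorphism of $\ell$-power order, and then taking for $\widehat\lat$ a suitable even overlattice of the orthogonal sum. As a preliminary one extracts the $\ell$-adic structure of $\lat$: an even lattice of odd determinant cannot be diagonalised over $\Z$, so $R$ already contains $\tfrac12$, and combining this with the hypothesis and localising at $\ell$ one sees that the orthogonal $R$-basis $e_1,\dots,e_r$ may be chosen with $b(e_i,e_i)=2\ell^{a_i}u_i$, $u_i\in R^{\times}$, where $\ell^{a_1}\mid\cdots\mid\ell^{a_r}$ are the elementary divisors of $\lat$. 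Thus $\sym{Det}(\lat)=\dual L/L$ is the $\ell$-group $\bigoplus_i\Z/\ell^{a_i}\Z$, each cyclic summand carrying a quadratic form in one of two square classes, and $e(\lat)=\sum_i\ell^{a_i}$.

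For each exponent $a\ge 1$ I would construct a companion $W_a$: an even positive definite lattice of rank $\ell^{a}-1$ with an automorphism $\tau_a$ of order $\ell^{a}$ that is fixed-point free and acts trivially on $\sym{Det}(W_a)$, whose discriminant form can be prescribed within the constraint imposed by Milgram's formula on $W_a$. The prototype is the root lattice $A_{\ell^{a}-1}$ with $\tau_a$ the cyclic permutation of its $\ell^{a}$ standard coordinates: it has rank $\ell^{a}-1$, no nonzero $\tau_a$-fixed vector, $\sym{Det}(A_{\ell^{a}-1})\cong\Z/\ell^{a}\Z$, and $\tau_a$ acts trivially on $\sym{Det}$ because $(\tau_a-1)g$ is a root for the standard glue vector $g$. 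To realise the other square class on $\Z/\ell^{a}\Z$, and --- when $e(\lat)\not\equiv0\bmod8$ forces it --- also to manufacture a prescribed $2$-primary summand of the discriminant group, I would replace $A_{\ell^{a}-1}$ by a different $\tau_a$-invariant form on the same cyclic module, equivalently by a trace-form lattice inside the $\tau_a$-module $\prod_{j=1}^{a}\Q(\zeta_{\ell^{j}})$ dictated by the characteristic polynomial $\prod_{j=1}^{a}\phi_{\ell^{j}}(t)=(t^{\ell^{a}}-1)/(t-1)$, adjusting the scaling factor of the trace form to produce the desired $\ell$-adic and $2$-adic invariants; in all these variants $\tau_a$ is fixed-point free and $\equiv1$ modulo the prime above $\ell$, hence trivial on the $\ell$-part of $\sym{Det}$ and, for order reasons, on its $2$-part.

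Now set $W=\bigoplus_i W_{a_i}$ (orthogonal direct sum), $\tau=\bigoplus_i\tau_{a_i}$, so that $W$ is positive definite of rank $\sum_i(\ell^{a_i}-1)=e(\lat)-r$, $W^{\tau}=0$, and $\tau$ is trivial on $\sym{Det}(W)$; put $\sigma=\sym{id}_{L}\oplus\tau$ on $L\oplus W$. Take $\widehat\lat$ to be the even overlattice of $\lat\oplus W$ given by a subgroup $H\subseteq\sym{Det}(\lat)\oplus\sym{Det}(W)$ which is (a) isotropic for $\underline q_{\lat}\oplus\underline q_{W}$, so that $\widehat\lat$ is an even integral lattice; (b) $\sigma$-stable, which here is automatic since $\sigma$ acts trivially on $\sym{Det}(\lat)\oplus\sym{Det}(W)$; (c) mapped injectively into $\sym{Det}(W)$ by the second projection; and (d) such that $H^{\perp}/H$ is $2$-primary. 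Concretely, $H$ is the graph of an anti-isometric embedding of $(\sym{Det}(\lat),\underline q_{\lat})$ into $\sym{Det}(W)$ with $2$-primary orthogonal complement --- it is exactly to make such an embedding available, matching the $\ell$-square classes summand by summand and leaving over a $2$-group of the right Gauss sum, that the companions were designed. Property (c) forces $\widehat\lat\cap(\lat\otimes\Q)=\lat$, so $\widehat\lat^{\sigma}=\lat$ on the nose; $\sym{rk}\,\widehat\lat=r+(e(\lat)-r)=e(\lat)$; and by (d) together with $2\in R^{\times}$ the level of $\widehat\lat$, i.e.\ the exponent of $\sym{Det}(\widehat\lat)=H^{\perp}/H$, is a unit in $R$. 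Since $\sigma$ has $\ell$-power order this proves the lemma.

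The crux is the discriminant-form bookkeeping behind the second and third steps: one must show that, for the finite quadratic form $(\sym{Det}(\lat),\underline q)$ that actually occurs, companions of the \emph{exact} prescribed ranks $\ell^{a_i}-1$ exist whose discriminant forms anti-match the cyclic summands of $\underline q$ one by one and leave over nothing but a $2$-group, and that this is consistent with Milgram's formula applied to $\lat$ and to $W$. That constraint is precisely what forces $\widehat\lat$ to be unimodular exactly when $e(\lat)\equiv0\bmod8$ and to acquire a nontrivial $2$-adic discriminant group when $e(\lat)\equiv4\bmod8$. Carrying out this matching explicitly, via cyclotomic trace-form lattices and the classification of finite quadratic forms over $\Z_{\ell}$ and $\Z_{2}$, is where the real work lies.
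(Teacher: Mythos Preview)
Your approach is genuinely different from the paper's, and considerably more elaborate. The paper does not glue $\lat$ to a companion lattice at all. Instead it \emph{unfolds} the diagonal form directly: writing $b(e_i,e_i)=a_i\ell^{\alpha_i}$ with $a_i\in R^\times$, it introduces an orthogonal family $e_{i,j}$ ($1\le j\le \ell^{\alpha_i}$) with $c(e_{i,j},e_{i,j})=a_i$, lets $\sigma$ cyclically permute the $e_{i,j}$ for each~$i$, and observes that the fixed vectors are the diagonals $\sum_j e_{i,j}$, each of norm $\ell^{\alpha_i}a_i=b(e_i,e_i)$. A congruence condition modulo a unit $d\in R^\times$, together with the requirement that the ``first slice'' $\sum_i x_{i,1}e_i$ lie in $L$, cuts out an even sublattice $\widehat\lat$ whose fixed part is $\lat$ on the nose. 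No discriminant-form theory, no Milgram bookkeeping, no cyclotomic trace forms: every auxiliary quantity introduced ($d$, the $a_i$, the index $[H:L]$) is already a unit in $R$, so the level of $\widehat\lat$ is automatically a unit in $R$.

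Your outline is plausible in spirit, but the step you yourself flag as ``where the real work lies'' is a genuine gap, not just missing detail. You need companions $W_{a}$ of rank exactly $\ell^{a}-1$ whose $\ell$-primary discriminant form realises a \emph{prescribed} square class on $\Z/\ell^{a}\Z$ while the rest of $\sym{Det}(W_a)$ is purely $2$-primary. Rescaling $A_{\ell^{a}-1}$ to switch square class forces a new prime into the level; only primes in $R^\times$ are harmless, and all you have established is $2\in R^\times$. When $\ell\equiv 1\bmod 8$ both $-1$ and $2$ are squares modulo~$\ell$, so powers of $2$ alone cannot produce the non-residue twist, and it is not clear your construction goes through without introducing an odd prime $p\notin R^\times$ into the level of $\widehat\lat$. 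The paper's unfolding trick sidesteps this entirely: it never needs to adjust square classes, because the unit part $a_i$ of $b(e_i,e_i)$ is carried along verbatim as the norm of each $e_{i,j}$.
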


\begin{proof}
  Let $\lat = \latdef Lb$, and let $e_i$ ($1\le i \le n$) be an
  orthogonal $R$-basis of $R\otimes_\Z L$. If $a_i$ is a $\Z$-basis of
  $L$ then $(a_i)_i = (e_i)_iM$ with a matrix~$M$ in~$\sym{GL}(n,R)$.
  Multiplying $M$ by the l.c.m.~$N$ of the denominators of its entries
  (which is a unit in $R$) and replacing $e_i$ by $e_i/N$ we can
  assume that~$L$ is contained in $H:=\bigoplus \Z e_i$. The index
  $[H:L]$ is an element of the group of units $R^*$ of $R$.  We can
  therefore find a natural number $d$ in $R^*$ such that $dH\subseteq
  L$ and such that $d \cdot b(x,x)$ is an even integer for all $x$ in
  $H$.  Write $b(e_i,e_i) = a_i\ell^{\alpha_i}$ with $a_i$ in $R^*$
  and an an integer $\alpha_i\ge 0$.  Note that the $\ell^\alpha_i$
  are the elementary divisors of $\lat$.  Denote by $\widehat {\lat
    [H]} = \latdef {\widehat H}c$ a lattice of rank $e(\lat)$ which
  possesses an orthonormal basis $e_{i,j}$ ($1\le i\le n$, $1\le j \le
  \ell^{\alpha_i}$) such that $c(e_{i,j},e_{i,j})=a_i$, and let
  $\sigma$ be the automorphism of $\widehat {\lat [H]}$, which, for
  each $i$, acts as
  \[
  e_{i,1}\mapsto e_{i,2} \mapsto \cdots \mapsto e_{i,\ell^{\alpha_i}}
  \mapsto e_{i,1} .
  \]
  The order of $\sigma$ is clearly a power of $\ell$.

  Finally, let $\widehat \lat$ be the sublattice of $\widehat {\lat
    [H]}$ whose underlying $\Z$-module is the set of all
  $\sum_{i,j}x_{i,j}e_{i,j}$ such that
  \[
  x_{i,1}\equiv x_{i,2} \equiv \cdots \equiv x_{i,\ell^{\alpha_i}}
  \bmod d
  \]
  for all $i$ and such that $\sum_i x_{i,1}e_i$ is in $L$.  We leave
  it to the reader to verify that $\widehat \lat$ is even, that its
  level is a unit in $R$, and that $\lat^\sigma$ is isomorphic
  to~$\lat$.
\end{proof}

The Theorem~\ref{thm-embedding-as-fixed-module} is now an immediate
consequence of the preceding lemma and the following theorem, whose
proof, however, seems to need some deeper facts from algebraic number
theory.

\begin{Theorem}
  \label{thm-diagonalization-over-localizations-of-Z}
  Let $S$ be the set of all nonzero integers which contain only primes
  $p\not=\ell$ and $p\not\equiv -1\bmod \ell$ as prime factors, and
  let $S^{-1}\Z$ the localization of $\Z$ at~$S$ (i.e.~the set of
  rational numbers $\frac rs$ with $r\in\Z$ and $s\in S$).  Then every
  lattice~$\lat$ can be diagonalized over $S^{-1}\Z$.
\end{Theorem}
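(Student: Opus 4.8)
The plan is to reduce the global diagonalization statement to a purely local one at each prime $p \in S$, using the Hasse–Minkowski type principle for quadratic forms over $S^{-1}\Z$. First I would recall that a quadratic form over the Dedekind domain $S^{-1}\Z$ admits an orthogonal basis if and only if it does so over every localization $(S^{-1}\Z)_{(p)} = \Zloc[(p)]$ with $p \in S$, together with the observation that over $\Q$ every quadratic form diagonalizes, and that the class group of $S^{-1}\Z$ is trivial so there are no obstructions coming from non-principal ideals when one patches local diagonalizations into a global orthogonal basis. Thus it suffices to show: for each prime $p \neq \ell$ with $p \not\equiv -1 \bmod \ell$, the lattice $\lat$ can be diagonalized over $\Z_{(p)}$ — and in fact this is automatic for $p \neq 2$ (odd residue characteristic, unit discriminant considerations aside), so the only genuine content is at $p = 2$, where the standard obstruction to diagonalization is the occurrence of the binary forms $\left(\begin{smallmatrix}0&1\\1&0\end{smallmatrix}\right)$ and $\left(\begin{smallmatrix}2&1\\1&2\end{smallmatrix}\right)$ as Jordan constituents in the $2$-adic Jordan splitting.

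So the crux is the $2$-adic analysis. Over $\Z_2$ (equivalently $\Z_{(2)}$), the lattice $\lat$ has a Jordan decomposition into pieces scaled by powers of $2$; but since the level of $\lat$ is a power of $\ell$, which is odd, the form is \emph{unimodular over $\Z_2$}, i.e.\ $\det G$ is a $2$-adic unit. A $2$-adically unimodular even lattice is an orthogonal sum of copies of $\left(\begin{smallmatrix}0&1\\1&0\end{smallmatrix}\right)$ and $\left(\begin{smallmatrix}2&1\\1&2\end{smallmatrix}\right)$ (the two even unimodular binary forms over $\Z_2$), so it does \emph{not} in general diagonalize over $\Z_2$ itself. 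The point where the hypothesis $p \not\equiv -1 \bmod \ell$ enters is this: we are allowed to diagonalize over the larger ring $S^{-1}\Z$, and the obstruction $\left(\begin{smallmatrix}0&1\\1&0\end{smallmatrix}\right) \perp \left(\begin{smallmatrix}0&1\\1&0\end{smallmatrix}\right)$ (resp.\ the analogous even rank-$4$ combinations) becomes diagonalizable as soon as suitable elements become available — concretely, $\left(\begin{smallmatrix}0&1\\1&0\end{smallmatrix}\right)^{\perp 2}$ and $\left(\begin{smallmatrix}2&1\\1&2\end{smallmatrix}\right)^{\perp 2}$ are isometric over $\Z_2$, and the hyperbolic plane $\left(\begin{smallmatrix}0&1\\1&0\end{smallmatrix}\right)$ is isometric to $\langle 1 \rangle \perp \langle -1 \rangle$ over any ring in which $2$ is a unit — but $-1$ need not be a \emph{square} residue. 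The correct statement is that one can split off a diagonal piece at the cost of introducing the scalar $-1$ or, more precisely, a unit whose class mod squares is controlled; the condition $p \not\equiv -1 \bmod \ell$ guarantees (via $\big(\frac{-1}{p}\big)$ or a Legendre-symbol computation) that the requisite units are already present in $S^{-1}\Z$, so no further primes need to be inverted.

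Concretely the steps are: (1) state the local-global principle for existence of orthogonal bases over $S^{-1}\Z$ and reduce to $p=2$; (2) invoke that $\lat$ is $2$-adically unimodular since its level is an odd power of $\ell$; (3) use the classification of even $2$-adically unimodular forms and the identity $\left(\begin{smallmatrix}0&1\\1&0\end{smallmatrix}\right) \perp \left(\begin{smallmatrix}0&1\\1&0\end{smallmatrix}\right) \cong \left(\begin{smallmatrix}2&1\\1&2\end{smallmatrix}\right) \perp \left(\begin{smallmatrix}2&1\\1&2\end{smallmatrix}\right)$ over $\Z_2$ to reduce to the case of a single binary constituent plus a diagonal form; (4) diagonalize the remaining binary even unimodular form over $S^{-1}\Z$ by exhibiting an explicit orthogonal vector, checking that its norm, a unit times a possible $-1$, lies in $(S^{-1}\Z)^*$ — this is exactly where the excluded primes $p \equiv -1 \bmod \ell$ must be inverted, and the hypothesis of the theorem says they already are. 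I expect step (4), the explicit $2$-adic (and $S^{-1}\Z$) diagonalization of the even unimodular binary form together with the bookkeeping of which units/square-classes one must have available, to be the main obstacle; everything else is formal descent and standard structure theory of $\Z_2$-lattices.
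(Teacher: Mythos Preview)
Your plan rests on a local--global principle that is false: diagonalizability of a lattice over a PID is \emph{not} a local property. For example, the $\Z$-lattice with Gram matrix $\left(\begin{smallmatrix}3&1\\1&3\end{smallmatrix}\right)$ diagonalizes over every $\Z_{(p)}$ (for $p=2$ the vector $e_1$ already has unit norm), yet not over $\Z$, since the form represents neither $1$ nor $2$ and hence no pair of represented integers has product equal to the determinant~$8$. Local diagonalizability only puts the lattice in the \emph{genus} of a diagonal form; trivial class group of the base ring does not close the gap between genus and global isometry. There are further confusions: the maximal ideals of $S^{-1}\Z$ lie over $\ell$ and the primes $p\equiv -1\bmod\ell$, i.e.\ the primes \emph{not} inverted---you have this reversed---and in particular $2$ is a unit in $S^{-1}\Z$ for $\ell\ge 5$, so the entire $2$-adic discussion is beside the point. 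Also, the theorem is stated for \emph{all} lattices, not only those of $\ell$-power level, so step~(2) is unjustified; and step~(4) never connects the hypothesis $p\not\equiv -1\bmod\ell$ to anything concrete (a norm ``$-1$'' cannot occur for a positive definite form).

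The paper's argument is of a different nature. One shows by induction on the rank that any integral $R$-lattice (with $R=S^{-1}\Z$) contains a vector $x$ whose norm generates the ideal $Rd$ of all values of~$b$; splitting off $Rx$ then works by Gram--Schmidt. To find such an $x$ one produces, via the Chinese Remainder Theorem, vectors $y,z$ with $b(y,y)/d$ and $b(z,z)/d$ coprime, so that $Q(s,t)=\tfrac{e}{d}\,b(sy+tz,sy+tz)$ is a primitive positive definite integral binary form; the problem then reduces to showing that $Q$ represents an element of $R^*$. This is the real arithmetic content (Theorem~\ref{thm-ray-class-norms}): one interprets the values of $Q$ as norms from an imaginary quadratic field, passes to a ray class group modulo~$\ell f$, and uses the existence of degree-one primes in every ray class together with Dirichlet's theorem on arithmetic progressions. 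The exclusion of primes $\equiv -1\bmod\ell$ and the hypothesis $\ell\ge 5$ enter precisely in this Legendre-symbol bookkeeping, not in any $2$-adic structure theory.
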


\begin{proof}
  Set $R:=S^{-1}\Z$.  It suffices to show that every integral
  $R$-lattice $\lat [M] = \latdef Mb$ contains an $x$ such that
  $b(x,x)$ divides $b(y,z)$ (in $R$) for all $y$ and $z$ in $M$. Here
  by integral $R$-lattice $\latdef Mb$ we mean a free $R$-module of
  finite rank equipped with a (positive definite) symmetric bilinear
  map $b:M\times M \rightarrow R$.

  In fact, if this holds true, and if $\lat [M] = \latdef Mb$ is an
  integral $R$-lattice then choose an element $x_1$ in $M$ such that
  $b(x_1,x_1)$ divides all values of $b$ on $M\times M$ and let $M_1$
  be the orthogonal complement of $x_1$. Then $M=Rx_1 + M_1$ since,
  for any $y$ in $M$, the number $t:=b(x_1,y)/b(x_1,x_1)$ is in $R$
  and $y-tx_1$ is perpendicular to $x_1$. i.e.~$y-tx_1$ is in $M_1$.
  Replacing $\lat [M]$ by $\latdef {M_1}b$ we recognize that our claim
  follows by induction on the rank of $\lat [M]$.

  So let $\lat [M] = \latdef Mb$ be a $R$-lattice, and let $Rd$ be the
  ideal generated by all values of $b$ on $M\times M$.  Note that $Rd$
  coincides with the ideal generated by all $b(x,x)$ with $x$ in $M$
  (since $2b(x,y)=b(x+y,x+y)-b(x,x)-b(y,y)$ and $2$, for $\ell\ge 5$
  is a unit in $R$).  We want to show the existence of an $x$ in $M$
  such that $b(x,x)/d$ is a unit it $R$.

  If $M$ has rank 1 this assumption is trivial.  If the rank of $\lat
  [M]$ is greater than or equal to 2 we can proceed as follows.
  Choose a $y$ such that $b(y,y)\not=0$. We can then find a $z$ in $M$
  such that $Rb(y,y)+Rb(z,z)=dR$.

  Namely, for each prime $p$ dividing $b(y,y)/d$ (in $R$) which is not
  a unit in $R$ there is a $y_p$ in $M$ such that $p$ does not divide
  $b(y_p,y_p)/d$ (since $Rd$ is generated by all values $b(y,y)$).
  Using the Chinese Remainder Theorem we find a $z$ in $M$ such that
  $b(z,z)\equiv b(y_p,y_p) \bmod p$ for all $p$ in question, in
  particular, such that $b(x,x)/d$ and $b(z,z)/d$ are relatively
  prime.

  Finally, choose a unit $e$ in $R$ such that $Q(s,t) := \frac ed
  b(sy+tz,sy+tz)$ is a positive definite primitive binary quadratic
  form with integer coefficients. It suffices now to show that $Q$
  represents an integer not containing $\ell$ or a prime $p\equiv -1
  \bmod \ell$.  But this is assured by the subsequent
  Theorem~\ref{thm-ray-class-norms}.
\end{proof}

\begin{Theorem}
  \label{thm-ray-class-norms}
  Let $Q(x,y)$ be an integral primitive positive definite binary
  quadratic form, and let $\ell$ be a prime, $\ell \ge 5$. Then there
  exist integers $x,y$ such that $Q(x,y)$ is only divisible by primes
  $p\not\equiv 0,-1 \bmod \ell$.
\end{Theorem}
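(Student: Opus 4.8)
The plan is to reduce everything to producing a single prime value of $Q$ lying in an admissible residue class mod~$\ell$. Indeed, if $Q$ represents a prime $p$ with $p\not\equiv 0,-1\bmod\ell$, then for any integers $x,y$ with $Q(x,y)=p$ the number $Q(x,y)=p$ has no prime factor $\equiv 0$ or $-1\bmod\ell$, and we are done. So it suffices to show that $Q$ represents infinitely many primes $p$ with $p\not\equiv -1\bmod\ell$; all but finitely many of them are then automatically different from $\ell$.

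I would base the argument on the classical description of the primes represented by a primitive positive definite form. Let $D=\mathrm{disc}(Q)<0$, let $K=\Q(\sqrt D)$, and let $L$ be the ring class field over $K$ of the order of discriminant $D$. It is well known (a theorem going back to Weber) that, away from finitely many exceptions, the primes represented by $Q$ are exactly those $p$ whose Frobenius conjugacy class in $\mathrm{Gal}(L/\Q)$ is contained in a fixed nonempty subset $\mathcal S$ which is a union of conjugacy classes; in particular this set of primes has positive density. I also recall the dihedral structure of $L/\Q$: $\mathrm{Gal}(L/K)$ is the ring class group, and any element of $\mathrm{Gal}(L/\Q)$ not fixing $K$ acts on it by inversion, so $\mathrm{Gal}(L/\Q)$ is a generalized dihedral group and every abelian quotient of it has exponent dividing~$2$.

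Next I would superimpose the congruence condition. The field $\Q(\zeta_\ell)$ has cyclic Galois group $(\Z/\ell)^\times$ over $\Q$ of order $\ell-1\ge 4$, and $\mathrm{Frob}_p$ in it is the class of $p$; in particular $p\equiv -1\bmod\ell$ precisely when $\mathrm{Frob}_p$ is complex conjugation $c$. Since $L\cap\Q(\zeta_\ell)$ is a subfield of $\Q(\zeta_\ell)$ whose Galois group over $\Q$ is a quotient of a group of exponent~$2$, it has degree at most $2$ over $\Q$; hence $H:=\mathrm{Gal}\big(\Q(\zeta_\ell)/(L\cap\Q(\zeta_\ell))\big)$ has order at least $(\ell-1)/2\ge 2$. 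Passing to $\widetilde L=L\cdot\Q(\zeta_\ell)$, whose Galois group is the fibre product of $\mathrm{Gal}(L/\Q)$ and $(\Z/\ell)^\times$ over $\mathrm{Gal}\big((L\cap\Q(\zeta_\ell))/\Q\big)$, I would pick $\sigma\in\mathcal S$ and set $\tau=\sigma|_{L\cap\Q(\zeta_\ell)}$; the elements $\rho\in(\Z/\ell)^\times$ with $\rho|_{L\cap\Q(\zeta_\ell)}=\tau$ form a coset of $H$, a set of at least two elements, so one of them, $\rho$, is different from $c$. The pair $(\sigma,\rho)$ then defines an element of $\mathrm{Gal}(\widetilde L/\Q)$, and the Chebotarev density theorem yields infinitely many primes $p$ whose Frobenius in $\mathrm{Gal}(\widetilde L/\Q)$ is conjugate to $(\sigma,\rho)$. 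For such $p$ with $p>\ell$ one has $\mathrm{Frob}_p|_L\in\mathcal S$, so $Q$ represents $p$, while $\mathrm{Frob}_p|_{\Q(\zeta_\ell)}=\rho\ne c$, so $p\not\equiv 0,-1\bmod\ell$, which finishes the proof.

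The point at which one has to be careful — and the main obstacle — is the possible overlap of the ``arithmetic'' field $L$ with the ``congruence'' field $\Q(\zeta_\ell)$: a priori $L$ could contain enough of $\Q(\zeta_\ell)$ to force every prime represented by $Q$ into a very small set of residues mod~$\ell$, conceivably contained in $\{0,-1\}$. What rescues the argument is the generalized-dihedral structure of $\mathrm{Gal}(L/\Q)$, which confines $L\cap\Q(\zeta_\ell)$ to the (at most quadratic) subfield $\Q(\sqrt{\ell^*})$ with $\ell^*=(-1)^{(\ell-1)/2}\ell$; combined with the hypothesis $\ell\ge 5$ — which guarantees $|(\Z/\ell)^\times|\ge 4$ and $1\not\equiv -1\bmod\ell$ — this leaves enough room to dodge the class of $c$. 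The remaining ingredients (the Frobenius description of represented primes, the dihedral structure of ring class fields, and Chebotarev) are entirely standard, so the write-up mainly consists in assembling them carefully.
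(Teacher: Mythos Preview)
Your argument is correct. The reduction to finding a single prime value $p\not\equiv 0,-1\bmod\ell$ is sound; the Frobenius description of primes represented by $Q$ via the ring class field $L$ is standard; the generalized-dihedral structure of $\mathrm{Gal}(L/\Q)$ does force every abelian quotient to have exponent $\le 2$, so $L\cap\Q(\zeta_\ell)$ is at most quadratic; and the Chebotarev step in the fibre product goes through because $(\Z/\ell)^\times$ is abelian, so conjugation in $\mathrm{Gal}(\widetilde L/\Q)$ fixes the second coordinate and $\mathrm{Frob}_p|_{\Q(\zeta_\ell)}$ really is $\rho\ne c$.

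The paper proves the same theorem by a different packaging of essentially the same ingredients. Instead of ring class fields it works directly with the ray class group $C$ of $K=\Q(\sqrt D)$ modulo $\ell f$ (so the conductor is built into the modulus rather than into a non-maximal order), and instead of analysing $L\cap\Q(\zeta_\ell)$ it studies the character $C\to\{\pm1\}$, $\mathfrak a\mapsto\big(\frac{\mathrm N(\mathfrak a)}{\ell}\big)$, whose kernel $\Gamma$ has index exactly $2$ when $D\ne-\ell$. The proof then branches into the cases $D=-\ell$ versus $D$ having another prime factor, and within the latter into $\ell\equiv 1$ versus $\ell\equiv 3\bmod 4$, using in each case that prime ideals of degree one hit every ray class. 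Your Galois-theoretic formulation is more uniform: the dihedral bound on $L\cap\Q(\zeta_\ell)$ replaces the case analysis in one stroke, and the single Chebotarev application replaces the separate invocations of Dirichlet/Hecke. Conversely, the paper's argument is slightly more elementary in that it never names the ring class field or the compositum explicitly and stays closer to the language of ideal norms and Legendre symbols. Underneath, both arguments pivot on the same phenomenon---the interaction between ``represented by $Q$'' and ``residue mod $\ell$'' is governed by an at most quadratic obstruction, which $\ell\ge 5$ makes too small to force $p\equiv -1$.
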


Note that the theorem does clearly not hold true for $\ell=2$. For
$\ell=3$ it does not hold true either: the quadratic form $2x^2+3y^2$
represents only numbers $n\equiv 0,-1 \bmod 3$ and each such $n$
contains at least one prime divisor $p\not\equiv +1 \bmod 3$.

\begin{proof}[Proof of Theorem~\ref{thm-ray-class-norms}]
  Let $Q(x,y)=ax^2+bxy+cy^2$, and write $b^2-4ac=Df^2$, where $D$ is a
  fundamental discriminant.

  Let $K=Q(\sqrt D)$, let ${\mathfrak O}=\Z+\Z\omega$ and ${\mathfrak
    O}_f = \Z+\Z f\omega$, where $\omega = \frac {D+\sqrt D}2$, and
  let $M=\Z a + \Z \frac{b+f\sqrt D}2$. Then $N_Q:=\{\frac
  1a\sym{N}(\alpha): \alpha \in M\}$ is the set of integers
  represented by $Q$ (we use $\sym{N}$ for the norm function on
  numbers or ideals in $K$). Moreover, $M{\mathfrak O}_f = M$.
  Replacing $Q$ by an equivalent form, if necessary, we may assume
  that $a$ and $\ell f$ are relatively prime (since we can find
  integers $x$ and $y$ such that $Q(x,y)$ is relatively prime to $\ell
  f$, e.g.~one may take for $x$ the product of all primes in $\ell f$
  dividing $a$ but not $c$, and for $y$ one may take the product of
  all primes in $\ell f$ not dividing $a$).

  But then $M+\ell f {\mathfrak O}_f = {\mathfrak O}_f$, which in turn
  implies that
  \[
  N:=\{\frac 1a {\sym{N}(\alpha)}: \alpha \in M{\mathfrak
    O}\cap(1+\ell f \mathfrak O)\}
  \]
  is a subset of $N_Q$.  Indeed, using $\ell f \mathfrak O \subseteq
  {\mathfrak O}_f$, we have
  \begin{align*}
    M{\mathfrak O}\cap(1+\ell f \mathfrak O) &\subseteq M{\mathfrak
      O}\cap{\mathfrak O}_f
    =\big(M{\mathfrak O}\cap{\mathfrak O}_f\big){\mathfrak O}_f\\
    &= \big(M{\mathfrak O}\cap{\mathfrak O}_f\big)\big(M+\ell f
    {\mathfrak O}_f\big) \subseteq M+\ell f M{\mathfrak O}\subseteq M
    .
  \end{align*}
  
  Now $M{\mathfrak O}=\Z a + \Z\omega$ (since $a$ and $f$ are
  relatively prime), hence $a = \sym{N}(M{\mathfrak O})$.  Therefore
  $N$ equals the set of norms of all integral ideals in the ideal
  class $\big({M{\mathfrak O}}\big)^{-1} P\in I/P$, where $P$ is the
  group of (fractional) ideals generated by the integral principal
  ideals $(\alpha)$ of $K$ such that $\alpha \equiv 1 \bmod \ell f$,
  and where $I$ is the group of fractional ideals of $K$ generated by
  all integral ideals relatively prime to~$\ell f$ (i.e.~$I/P$ is what
  is usually called the ray class group modulo $\ell f$).

  It remains to show that every ideal class $A$ in $C=I/P$ contains an
  integral ideal whose norm is in the group of units $(S^{-1}\Z)^*$,
  where $R:=S^{-1}\Z$ is the ring introduced in
  Theorem~\ref{thm-diagonalization-over-localizations-of-Z}. For the
  moment, we denote the set of~$A$ containing such an ideal by
  $\Sigma$.  Note that $\Sigma$ is a subgroup. It is obviously closed
  under multiplication. Moreover, if ${\mathfrak a}$ is an integral
  ideal in a class $A$ in $\Sigma$ whose norm is in $R^*$, then
  $A^{-1}$ contains the integral ideal ${\mathfrak
    a}^{-1}\sym{N}({\mathfrak a})^{\varphi(f\ell)}$ (where $\varphi$
  denotes Euler's $\varphi$-function), whose norm is again in $R^*$.
  We shall use repeatedly that every ideal class in $C$ contains prime
  ideals of degree one (as follows e.g.~from~\cite[p.~318]{Hecke}).

  We distinguish two cases.

  Case 1: $D=-\ell$. Let ${\mathfrak p}$ be a prime ideal of degree
  one in a given ideal class $A$ in $C$. For $p=\sym{N}({\mathfrak
    p})$ we then have $\legendre p\ell = \legendre Dp = +1$. In
  particular, $p \not\equiv 0,-1 \bmod \ell$ (since $\ell =-D \equiv 3
  \bmod 4$).

  Case 2: $D$ contains a prime factor different from $\ell$. Here we
  consider the map ${\mathfrak a}\mapsto \legendre{\sym{N}(\mathfrak
    a)}\ell$, which induces a group homomorphism of $C$.  The
  kernel~$\Gamma$ of this homomorphism has index at most 2 in $C$.  In
  fact, it has index exactly equal to 2: choose a prime $p$ such that
  $\legendre Dp=+1$ and $\legendre p\ell=-1$ (this is possible by
  Dirichlet's theorem on arithmetic progressions and since~$D$
  contains a prime different from $\ell$). Then $p$ is the norm of a
  prime ideal which is not in $\Gamma$.

  If $\ell \equiv 3 \bmod 4$ then $\Gamma$ is contained in $\Sigma$.
  Indeed, if $A$ is in $\Gamma$, then any prime ideal of degree one in
  $A$ with norm, say, $q$ satisfies $q\not\equiv 0,-1 \bmod \ell$
  (since $\legendre q\ell=+1$).  But the group $\Sigma$ is strictly
  bigger than $\Gamma$ as can be seen by choosing the prime $p$ of the
  last paragraph such that $p\not\equiv -1 \bmod \ell$ (for fulfilling
  this and $\legendre p\ell=-1$ at the same time we need $\ell\ge 5$).
  Since the index of $\Gamma$ in $C$ is 2 we conclude that its index
  in $\Sigma$ is~2 and $\Sigma=C$.

  If $\ell \equiv 1 \bmod 4$ then $C\setminus\Gamma$ is in $\Sigma$ as
  can be seen by picking in a given class $A$ in $C\setminus\Gamma$ a
  prime ideal of degree 1. In fact, its norm $q$ is different from
  $\ell$ and satisfies $q \not\equiv -1 \bmod \ell$ (since $\legendre
  q\ell=-1$). Since $\Gamma\not= C$ the set $C\setminus\Gamma$ is a
  (the) nontrivial $\Gamma$ coset, which is contained in $\Sigma$, and
  we again conclude~$\Sigma = C$.

  This proves the theorem.
\end{proof}

%%%%%%%%%%%%%%%%%%%%%%%%%%%%%%%%%%%%%%%%%%%%%%%%%%%%%%%%%%%%%%%%%%%%%%%%%%%%%

%%%%%%%%%%%%%%%%%%%%%%%%%%%%%%%%%%%%%%%%%%%%%%%%%%%%%%%%%%%%%%%%%%%%%%%%%%%%%
\bigskip
\begin{flushleft}
  \small
  Nils-Peter Skoruppa\\
  Universit\"at Siegen\\
  Fachbereich Mathematik,\\
  Walter-Flex-Stra{\ss}e 3\\
  57068 Siegen, Germany\\
  email: nils.skoruppa@uni-siegen.de
\end{flushleft}


\begin{thebibliography}{Hecke}

\bibitem[Hecke]{Hecke} Erich Hecke, \"Uber die $L$-Funktionen und den
  Dirichletschen Prim\-zahlsatz f\"ur einen beliebigen Zahlk\"orper,
  Nachr. Akad. Wiss. G\"ott\-ingen Math.-Phys. Kl. 1917 (1917), 299--318

\bibitem[Kl]{Kl} Hendrik Douwe Kloosterman, The behaviour of general
  theta functions under the modular group and the characters of binary
  modular congruence groups I, II, Ann.~of Math.~(2) 47 (1946),
  317--375, 376--447.

\bibitem[N-Sl]{N-Sl} Gabriele Nebe and Neil James Alexander Sloane, A
  Catalogue of Lattices,
  http:/\hskip-3pt/www.research.att.com/\~{}njas/lattices/

\bibitem[Serre]{Serre} Jean-Pierre Serre, Divisibilit\'e de certaines
  fonctions arithm\'etiques, Enseign.~Math.~(2) 22 (1976), 227--260

\bibitem[Sw-D]{Sw-D} Henry Peter Francis Swinnerton-Dyer, On 1-adic
  representations and congruences for coefficients of modular forms.
  in Modular Functions of One Variable~III, LNM~350, Springer 1973

\end{thebibliography}
\end{document}